\documentclass{amsart}

\usepackage{amsfonts,amssymb,amsmath,amscd}

\theoremstyle{plain}
\newtheorem{thm}{Theorem}[section]
\newtheorem{lemma}[thm]{Lemma}
\newtheorem{prop}[thm]{Proposition}

\newtheorem{cor}[thm]{Corollary}

\theoremstyle{remark}

\theoremstyle{definition}
\newtheorem{defn}[thm]{Definition}
\newtheorem{example}[thm]{Example}

\newcommand{\End}{\operatorname{End}}

\newcommand{\Aut}{\operatorname{Aut}}

\begin{document}
\title{Duals of simple two-sided vector spaces}
\author{J. Hart}
\address{Department of Mathematics, University of Montana, Missoula, MT 59812-0864}
\author{A. Nyman}
\address{Department of Mathematics, 516 High St, Western Washington University, Bellingham, WA 98225-9063}
\email{adam.nyman@wwu.edu}
\date{\today}
\thanks{2010 {\it Mathematics Subject Classification. } Primary 16D20 ; Secondary 16S38}
\thanks{The second author was partially supported by the National Security Agency under grant NSA H98230-05-1-0021.}

\begin{abstract}
Let $K$ be a perfect field and let $k \subset K$ be a subfield.  In \cite[Theorem 3.2]{papp},
left finite dimensional simple two-sided $k$-central vector spaces over $K$ were classified by arithmetic data
associated to the extension $K/k$.  In this paper, we continue to study the relationship between simple two-sided vector spaces
and their associated arithmetic data.  In particular, we determine which arithmetic data corresponds to simple two-sided vector spaces with the same left and right dimension, and
we determine the arithmetic data associated to the left and right dual of a simple two-sided vector space.
As an immediate application, we prove the existence of the non-commutative symmetric algebra of any $k$-central two-sided
vector space over $K$ which has the same left and right dimension.
\end{abstract}

\maketitle

\section{Introduction}
In non-commutative algebraic geometry, one thinks of graded
non-commutative rings as homogeneous coordinate rings of
non-commutative spaces. If a graded non-commutative ring has certain
features in common with a commutative ring $A$, the associated
space is considered a non-commutative analogue of
$\operatorname{Proj }A$.  A standard approach to constructing
non-commutative analogues of the projective line over a field $K$
is to take its homogeneous coordinate ring to be a quotient of the
free $K$-algebra $K \langle x, y \rangle$ having favorable
homological properties.  For example, if $K$ is algebraically
closed and $B=K \langle x, y \rangle/ (yx-qxy)$ for some $q \in
K^{*}$ or $B=K \langle x, y \rangle/ (yx-xy-x^{2})$, then $B$ is
Artin-Schelter regular of global dimension 2 \cite[Proposition
2.1, Chapter 17]{paul}, and $B$ is considered a homogeneous
coordinate ring for a non-commutative analogue of
$\mathbb{P}^{1}$.  Note that in these cases, $B$ is
obtained from $A=K[x,y]$ by deforming the commutativity relation
$yx-xy$.

If $W$ is a two-dimensional vector space over $K$ and
$\mathcal{S}_{K}(W)$ denotes the symmetric algebra of
$W$, then there is an isomorphism of graded $K$-algebras
$K[x,y] \rightarrow \mathcal{S}_{K}(W)$.  This suggests another
way to construct a homogeneous coordinate ring of a non-commutative
projective line, due to M. Van den Bergh \cite[Section 1]{p1bundle}.  Let $k$ be a subfield of $K$.  Instead of deforming
relations in $K[x,y]$, one deforms $W$ by replacing it with a $k$-central
{\it two-sided} vector space $V$ of rank two, i.e. $V$ is a $K \otimes_{k}K$-module which
is two-dimensional as both a $K \otimes 1$-module and a $1 \otimes K$-module.
Van den Bergh shows that when $K$ is the field of fractions of a smooth integral scheme $X$ of finite type over $k$ and $V$ is the generic localization of
a locally free rank 2 $\mathcal{O}_{X}$-bimodule (see \cite{p1bundle} for a definition), one can
construct a canonical non-commutative ring from $V$,
$\mathcal{S}_{K}^{n.c.}(V)$, in such a way that
$\mathcal{S}_{K}^{n.c.}(V)$ is ``equivalent" to
$\mathcal{S}_{K}(V)$ when $V$ is $K$-central \cite[Section 5.2]{p1bundle}.

Now suppose $K$ is a perfect field and let $k \subset K$ be a subfield.  In \cite[Theorem 3.2]{papp},
simple two-sided $k$-central vector spaces over $K$ are classified by arithmetic data
associated to the extension $K/k$.  In this paper, we continue to study the relationship between the arithmetic of the extension $K/k$ and simple two-sided vector spaces.  As an immediate application of our study, we prove the existence of non-commutative symmetric algebras.  Our initial motivation for studying non-commutative symmetric algebras comes from a conjecture of Mike Artin \cite{artin} that
the division ring of fractions of a non-commutative surface not finite over its center is the division ring of fractions of a non-commutative symmetric algebra over the field of fractions of a curve.
The investigations in this paper are part of an ongoing project to classify division rings of fractions of non-commutative symmetric algebras of rank 2 two-sided vector spaces.

Our first goal is to find an arithmetic criterion for a simple two-sided vector space with left dimension $n$ to have rank $n$, i.e. to have right dimension also equal to $n$.
We achieve this goal in Proposition \ref{prop.vlambda}.  In order to describe this result, we need to recall the classification of simple two-sided vector spaces of finite left dimension.  To this end, we introduce some notation.
We write $\operatorname{Emb}(K)$ for the set of $k$-linear embeddings of $K$ into $\overline{K}$, and we let $G=\Aut(\overline{K}/K)$.  Now, $G$ acts on $\operatorname{Emb}(K)$ by left composition. Given $\lambda\in \operatorname{Emb}(K)$, we denote the orbit of $\lambda$ under this action by $\lambda^G$.  We denote the set of finite orbits of $\operatorname{Emb}(K)$ under the action
of $G$ by $\Lambda(K)$.  Left finite dimensional simple two-sided vector spaces are classified by the following

\begin{thm} \label{thm.papp} \cite{papp} There is a one-to-one correspondence
between isomorphism classes of simple left finite dimensional
two-sided vector spaces and $\Lambda(K)$. Moreover, if $V$ is a
simple two-sided vector space corresponding to $\lambda^G\in
\Lambda(K)$, then $\dim_{K} ({}_{K}V) =|\lambda^G|$.
\end{thm}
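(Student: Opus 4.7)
The plan is to construct explicit mutually inverse maps between isomorphism classes of simple left finite dimensional two-sided vector spaces and $\Lambda(K)$, using the Galois correspondence for the Galois extension $\overline{K}/K$, which is available because $K$ is perfect.

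Given $\lambda\in \Emb(K)$ with $\lambda^G$ finite, I would set $L_\lambda := K\cdot\lambda(K)\subseteq \overline{K}$ and observe that the stabilizer of $\lambda$ under the $G$-action is exactly $\Aut(\overline{K}/L_\lambda)$: an element $g\in G$ fixes $K$ automatically, so $g\lambda=\lambda$ iff $g$ fixes $\lambda(K)$, iff $g$ fixes $L_\lambda$. Finiteness of $\lambda^G=G/\Stab(\lambda)$ then forces $\Stab(\lambda)$ to be an open subgroup in the Krull topology, so by infinite Galois theory $L_\lambda/K$ is a finite extension with $[L_\lambda:K]=|\lambda^G|$. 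Equip $L_\lambda$ with the structure of a two-sided vector space $V_\lambda$ by using the inclusion $K\subset L_\lambda$ on the left and the rule $v\cdot r := v\lambda(r)$ on the right. The image of $K\otimes_k K$ in $\End_k(V_\lambda)$ is the subring $K\cdot\lambda(K)=L_\lambda$ itself, so $K\otimes_k K$-submodules of $V_\lambda$ are exactly $L_\lambda$-subspaces of the field $L_\lambda$; hence $V_\lambda$ is simple, and its left $K$-dimension equals $[L_\lambda:K]=|\lambda^G|$.

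Conversely, let $V$ be simple with $\dim_{K}({}_{K}V)=n$. A choice of left basis turns the right action into a $k$-algebra embedding $\rho:K\hookrightarrow \End_K({}_K V)\cong M_n(K)$, and the image of $K\otimes_k K$ in $M_n(K)$ is the commutative $K$-subalgebra $A:=K[\rho(K)]$ (commutative because $\rho(K)$ is commutative and the left $K$ acts as scalars). Since $A\subseteq M_n(K)$ is finite dimensional over $K$, it is Artinian, and simplicity of $V$ as a $K\otimes_k K$-module forces $V\cong A/\mathfrak{m}$ for some maximal ideal $\mathfrak{m}$; write $L:=A/\mathfrak{m}$. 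Then $L/K$ is a degree-$n$ field extension, and composing $\rho$ with the quotient yields an injective $k$-algebra map $\overline{\rho}:K\hookrightarrow L$ (injective because $\overline{\rho}(1)=1$ and $K$ is a field). Fixing any $K$-embedding $\iota:L\hookrightarrow \overline{K}$, set $\lambda:=\iota\circ\overline{\rho}\in \Emb(K)$. Changing the basis conjugates $\rho$ by an element of $\operatorname{GL}_n(K)$, while changing $\iota$ post-composes it by an element of $G$; both ambiguities vanish upon passing to $\lambda^G\in\Lambda(K)$.

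To finish, one verifies these assignments are mutually inverse. Applied to $V_\lambda$, the second construction returns $A=L_\lambda$, $\mathfrak{m}=0$, $L=L_\lambda$, and $\overline{\rho}=\lambda$, recovering the orbit $\lambda^G$. In the other direction, starting from $V$ and producing $\lambda=\iota\overline{\rho}$, one has $L_\lambda=K\cdot\lambda(K)=\iota(L)$, and the map $\iota:L\to \iota(L)=V_\lambda$ is a two-sided vector space isomorphism from $V\cong L$ onto $V_\lambda$. The dimension formula $\dim_{K}({}_{K}V_\lambda)=[L_\lambda:K]=|\lambda^G|$ established in the first step supplies the ``moreover'' part of the theorem. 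The main obstacle is careful handling of the Galois correspondence in the potentially infinite extension $\overline{K}/K$: one must verify that $\Stab(\lambda)$ is closed in the Krull topology so that finite index implies open and thus corresponds to a finite intermediate extension. Once this dictionary is in place, the rest reduces to bookkeeping on maximal ideals of the finite-dimensional commutative $K$-algebra $K[\rho(K)]$.
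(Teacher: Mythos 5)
The paper does not prove this theorem---it is quoted directly from \cite{papp}---so there is no in-paper proof to compare against; I can only judge your argument on its own and against what the paper reveals of the cited one. Your proof is correct, and it takes a genuinely different route from the argument in \cite{papp} that this paper alludes to. The cited approach works with the matrix homomorphism $\phi:K\to M_n(K)$ determined by a left basis and simultaneously diagonalizes its image over $\overline K$ (this is the step referenced as ``Part 1, Step 1 of Proposition 3.5'' in the proof of Lemma~\ref{lemma.transpose}). You instead pass to the commutative $K$-subalgebra $A=K[\rho(K)]\subseteq M_n(K)$ through which the $K\otimes_k K$-action factors, and exploit that $V$ is a simple, cyclic \emph{and faithful} $A$-module. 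This is cleaner, and in fact cleaner than you let on: faithfulness gives $\operatorname{Ann}_A(V)=0$, and since $V$ simple forces $\operatorname{Ann}_A(V)$ to be maximal (a standard commutativity argument: $\mathfrak m v=0$ for one generator $v$ implies $\mathfrak m w=\mathfrak m a v=a\mathfrak m v=0$ for all $w=av$), you get $\mathfrak m=0$ outright, so $A$ is \emph{already} the field $L$ and $V\cong A$ with no quotient needed---your phrasing ``composing $\rho$ with the quotient'' suggests a nontrivial quotient might occur, and you should state explicitly that it does not. Two routine points would round out the write-up: first, the well-definedness of the forward map $\lambda^G\mapsto[V_\lambda]$, namely that replacing $\lambda$ by $g\lambda$ yields an isomorphic two-sided vector space via the map $g|_{L_\lambda}:L_\lambda\to L_{g\lambda}$; second, a one-line justification that $K[\lambda(K)]=K\cdot\lambda(K)=L_\lambda$, i.e.\ that the image of $K\otimes_kK$ in $\operatorname{End}_k(V_\lambda)$ really is the whole field and not merely a $K$-subalgebra of it (it is a finite domain over $K$, hence a field). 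Your handling of the one genuinely delicate point---that the stabilizer of $\lambda$ equals $\Aut(\overline K/L_\lambda)$, hence is closed in the Krull topology, hence open of finite index, hence corresponds to a finite intermediate extension with $[L_\lambda:K]=|\lambda^G|$---is correct, and this is exactly where perfectness of $K$ is used, via normality plus separability of $\overline K/K$.
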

Let $\lambda \in \operatorname{Emb}(K)$ have $|\lambda^{G}|=n$ and let $K(\lambda)$ denote the composite of $K$ and $\lambda(K)$.  We define
a two-sided vector space, $V(\lambda)$, as follows:  let $V(\lambda)$ have underlying set $K(\lambda)$ and
$K \otimes_{k} K$-structure induced by the formula
$a \cdot v \cdot b := av\lambda(b)$.

We observe the following (Proposition \ref{prop.vlambda})
\begin{prop} \label{prop.introv}
Suppose $V$ is a left finite dimensional simple
two-sided vector space (of left dimension $n$)
corresponding (via Theorem \ref{thm.papp}) to an embedding $\lambda$.  Then $V \cong V(\lambda)$.  Therefore,
$V$ has rank $n$ if and only if $[K(\lambda):\lambda(K)]=n$.
\end{prop}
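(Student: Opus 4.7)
The plan is to invoke Theorem \ref{thm.papp}: if I can exhibit $V(\lambda)$ itself as a simple two-sided vector space of left dimension $n$ whose associated orbit in $\Lambda(K)$ is $\lambda^G$, then $V \cong V(\lambda)$ by the uniqueness in that correspondence, and the rank criterion follows from computing $\dim V(\lambda)_K$ directly from the definition of the right action.

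First I would verify three properties of $V(\lambda) = K(\lambda)$. For the left dimension, the left action is ordinary field multiplication, so $\dim_K {}_K V(\lambda) = [K(\lambda):K]$; since $K$ is perfect, $K(\lambda)/K$ is separable, the restriction map $G \to \operatorname{Emb}_K(K(\lambda),\overline{K})$ is surjective, and the stabilizer of $\lambda$ under the $G$-action on $\operatorname{Emb}(K)$ is precisely $\Aut(\overline{K}/K(\lambda))$. Orbit--stabilizer then gives $[K(\lambda):K] = |\lambda^G| = n$. For simplicity, any sub-bimodule $W \subset K(\lambda)$ is a left $K$-subspace closed under right multiplication by $\lambda(b)$ for every $b \in K$, hence a $K(\lambda)$-subspace of the one-dimensional $K(\lambda)$-vector space $K(\lambda)$, so $W = 0$ or $W = K(\lambda)$. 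For the identification of the orbit, I must match $V(\lambda)$ against the bijection of \cite{papp}: this is typically realized by extending scalars on the left to $\overline{K}$ and decomposing $\overline{K} \otimes_K V(\lambda)$ as a right $K$-module into generalized eigenspaces labelled by embeddings $K \hookrightarrow \overline{K}$. For $V(\lambda)$ this decomposition is transparent, since the eigenvalues of $b \in K$ acting on the right work out to $\{\mu(b) : \mu \in \lambda^G\}$, so the associated orbit is $\lambda^G$.

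Granted $V \cong V(\lambda)$, the rank statement is immediate: since the right action of $b$ on $K(\lambda)$ is multiplication by $\lambda(b)$, the right $K$-module structure on $V(\lambda)$ factors through $\lambda$ and identifies $V(\lambda)_K$ with the $\lambda(K)$-vector space $K(\lambda)$; hence $\dim V(\lambda)_K = [K(\lambda):\lambda(K)]$, and rank $n$ amounts to $[K(\lambda):\lambda(K)] = n$. The main obstacle will be the orbit identification in the second paragraph, which requires unpacking the construction of the bijection in Theorem \ref{thm.papp} rather than using it as a black box; by contrast, the left-dimension count and the simplicity verification reduce to routine Galois theory and elementary linear algebra.
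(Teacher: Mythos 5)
Your proof is correct, but it takes a genuinely different route from the paper's. The paper's argument is explicitly computational: it directly computes the matrix for the right action of $K$ on $V(\lambda)$ in the left basis $\{\alpha_i\}$, shows this matrix is $\phi^T$ where $\phi$ is the homomorphism defined by (\ref{eqn.phi}), and then invokes Lemma \ref{lemma.transpose} (that $K^n_{\phi^T} \cong K^n_\phi$, proved via the diagonalization of $\phi$ over $\overline{K}$) to conclude $V(\lambda) \cong K^n_\phi$, which is the simple module corresponding to $\lambda$. You instead work from the uniqueness half of Theorem \ref{thm.papp}: you verify that $V(\lambda)$ is simple (because the subring of $K(\lambda)$ generated by $K$ and $\lambda(K)$ is already all of $K(\lambda)$, so a sub-bimodule is a $K(\lambda)$-subspace), that its left dimension is $[K(\lambda):K] = |\lambda^G|$ by orbit--stabilizer, and that its associated orbit is $\lambda^G$ by decomposing $\overline{K} \otimes_K V(\lambda)$ into right-eigenspaces. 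Your route is conceptually cleaner in one respect: it never needs the transpose lemma, since once the orbit is pinned down, uniqueness forces the isomorphism. The tradeoff is precisely the one you flagged: matching $V(\lambda)$ to the correct orbit requires unpacking how the bijection of \cite{papp} is actually realized (via the eigenvalues of $\phi$ over $\overline{K}$), whereas the paper's calculation needs only the explicit formula (\ref{eqn.phi}) and the diagonalization fact it cites for Lemma \ref{lemma.transpose}. Since the paper's own proof of that lemma rests on the same diagonalization statement from \cite{papp}, the two approaches ultimately draw on the same structural input, just arranged differently. Your computation of the rank, $\dim_K V(\lambda)_K = [K(\lambda):\lambda(K)]$ via the identification of the right action with $\lambda(K)$-scalar multiplication, matches what the paper extracts from its matrix computation.
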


This criterion, apart from being useful in the study of moduli of two-sided vector spaces, facilitates the explicit construction of
simple two-sided vector spaces of finite rank.  For example, in Section \ref{sec.family}, we use Proposition \ref{prop.introv} in the case that $K=k(t)$ where $t$ is transcendental over $k$
to exhibit a five-dimensional family of simple two-sided vector spaces $V$ over $k(t)$ of rank 2.  More specifically, we obtain the following
\begin{prop} \label{prop.john}
Suppose $\operatorname{char }k =0$.  If $V$ is a simple two-sided vector-space over $k(t)$ corresponding (via Theorem \ref{thm.papp}) to $\lambda \in \operatorname{Emb}(k(t))$ with $\lambda(t)=\alpha+\sqrt{\frac{at^2+bt+c}{dt^2+et+f}}$ such that
\begin{itemize}
\item $\alpha, a, b, c, d, e, f \in k$,

\item $a$, $d$ not both zero,

\item $ae=bd$, $af \neq cd$, $b^2 \neq 4ac$, and $e^2 \neq 4df$,
\end{itemize}
then $V$ has rank 2.
\end{prop}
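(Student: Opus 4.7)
The plan is to apply Proposition~\ref{prop.introv}: since $V$ corresponds to $\lambda$, the rank of $V$ equals $2$ if and only if both $|\lambda^G| = [K(\lambda):K] = 2$ and $[K(\lambda):\lambda(K)] = 2$. Writing $A(t) = at^2+bt+c$, $B(t) = dt^2+et+f$, and $s := \lambda(t) - \alpha = \sqrt{A(t)/B(t)} \in \overline{K}$, we have $K(\lambda) = K(s) = k(t,s)$ and (since $\alpha \in k$) $\lambda(K) = k(\alpha + s) = k(s)$, so the two conditions to verify become $[k(t,s):k(t)] = 2$ and $[k(t,s):k(s)] = 2$.

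For the left dimension, I first note that the hypotheses force both $a$ and $d$ to be nonzero: if $d = 0$ then $e^2 \neq 4df = 0$ gives $e \neq 0$, while $ae = bd = 0$ forces $a = 0$, violating the non-vanishing of $(a,d)$; the case $a = 0$ is symmetric. Given $a,d \neq 0$, both $A$ and $B$ are degree-$2$ polynomials with simple roots in $\overline{k}$ (by $b^2 \neq 4ac$ and $e^2 \neq 4df$), so the divisor of $A/B$ on $\mathbb{P}^1_{\overline{k}}$ has no contribution at infinity and consists of four simple points. For $A/B$ to be a square in $k(t)$ this divisor would have to be even, which (given simple roots) forces the root sets of $A$ and $B$ to coincide, hence $A = \mu B$ for some $\mu \in k^*$ and $af = \mu df = cd$, contradicting $af \neq cd$.

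For the right dimension, rearranging $s^2 B(t) = A(t)$ shows that $t$ is a root of
\[
P(T) := (a - ds^2) T^2 + (b - es^2) T + (c - fs^2) \in k(s)[T].
\]
The leading coefficient $a - ds^2$ is nonzero (else $A/B$ would be the constant $a/d$, giving $A = (a/d)B$ and $af = cd$ as above), so $P$ has degree exactly $2$ in $T$. Since $s$ is transcendental over $k$ (otherwise $A/B$ would again be constant), $k(s)$ is a rational function field, and the right dimension is $2$ if and only if $P(T)$ is irreducible over $k(s)$, equivalently (by Gauss's lemma applied in $k[s]$) the discriminant
\[
D(s) = (b^2 - 4ac) + (4af + 4cd - 2be)s^2 + (e^2 - 4df)s^4
\]
is not a polynomial square in $k[s]$. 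Supposing $D = (p_0 + p_1 s + p_2 s^2)^2$ with $p_i \in k$, comparison of coefficients gives $p_0^2 = b^2 - 4ac \neq 0$ and $p_2^2 = e^2 - 4df \neq 0$, so $p_0, p_2 \neq 0$; then $2 p_0 p_1 = 0$ forces $p_1 = 0$, leaving $2 p_0 p_2 = 4af + 4cd - 2be$. Squaring and expanding yields $(b^2 - 4ac)(e^2 - 4df) = (2af + 2cd - be)^2$, and using $ae = bd$ to cancel the cross terms $abef = b^2 df$ and $bcde = ace^2$ reduces this to $(af - cd)^2 = 0$, contradicting $af \neq cd$. The main technical obstacle is precisely this final simplification: the cross-term cancellations depend crucially on $ae = bd$, and the identity would not close without it.
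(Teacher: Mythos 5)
Your proof is correct, but it takes a genuinely different route from the paper's. The paper isolates the field-theoretic content in Lemma~\ref{lemma.numerical}, which shows by an explicit polynomial argument that $\gcd(at^2+bt+c,\,dt^2+et+f)=1$ (whence $[k(t):k(m)]=2$) and that $\sqrt{m}\notin k(t)$ (by assuming $m=p^2/q^2$ and deriving $b^2=4ac$ after a degree analysis); the proposition then follows from a short tower argument. You instead establish $\sqrt{m}\notin k(t)$ via the parity of the divisor of $A/B$ on $\mathbb{P}^1_{\overline{k}}$, and establish the right dimension directly, by writing down the quadratic $P(T)\in k(s)[T]$ satisfied by $t$ and checking that its discriminant $D(s)$ is not a square in $k[s]$ (hence not in $k(s)$, $k[s]$ being a UFD). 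What the paper's version buys is elementary self-containment (no divisor-theoretic language); what yours buys is conceptual transparency for the ``not a square'' step and a more direct attack on the right dimension that bypasses the tower over $k(m)$. Both arguments ultimately hinge on the same arithmetic identity involving $ae=bd$ and $af\neq cd$ --- your discriminant computation makes that dependence especially explicit. One small imprecision: the phrase ``consists of four simple points'' tacitly assumes $A$ and $B$ are coprime, which you have not yet verified at that point; however, the very next sentence (an even divisor forces the root sets of $A$ and $B$ to coincide) is valid whether or not some roots are shared, so the conclusion stands. You may want to either note $\gcd(A,B)=1$ first (as the paper does, from $ae=bd$ and $af\neq cd$) or rephrase to say the divisor is supported on the roots of $A$ and $B$ with multiplicities in $\{0,\pm1\}$.
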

In Section \ref{sec.duals}, we turn to the study of left and right duals of two-sided vector spaces, since they arise in the construction of the non-commutative symmetric algebra.
We find formulas for the left and right dual of a simple two-sided
vector space of finite rank in terms of its associated arithmetic data.  Our main result is the following (Theorem \ref{thm.dualformula})

\begin{thm} \label{thm.introthm}
Suppose $\lambda \in \operatorname{Emb}(K)$ has $|\lambda^{G}|=n$ and $[K(\lambda):\lambda(K)]$ is finite.  Let $\overline{\lambda}$ denote an extension of $\lambda$ to $\overline{K}$, and let
$\mu = (\overline{\lambda})^{-1}|_{K}$.  Then ${}^{*}V(\lambda) \cong V(\lambda)^{*} \cong V(\mu)$, and $V(\lambda)$ has rank $n$ if and only if $V(\mu)$ has rank $n$ if and
only if $[K(\lambda):\lambda(K)]=n$.
\end{thm}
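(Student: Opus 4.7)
My plan is to construct explicit isomorphisms to $V(\mu)$ using the classical trace pairing for separable field extensions, and to deduce the rank statement from Proposition~\ref{prop.vlambda}.

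First I set $L := K(\lambda)$, $L' := K(\mu)$, and $r := [L:\lambda(K)]$. The extension $\overline{\lambda} \in \Aut(\overline{K}/k)$ restricts to a field isomorphism $L' \to L$ (since $\overline{\lambda}(\mu(K)) = K$ and $\overline{\lambda}(K) = \lambda(K)$), so
$$[L':K] = [L:\lambda(K)] = r \qquad \text{and} \qquad [L':\mu(K)] = [L:K] = n.$$
Because $K$ is perfect, $\overline{K}/K$ is Galois, and the stabilizer of $\mu$ under the $G$-action equals $\Aut(\overline{K}/L')$; the Galois correspondence then gives $|\mu^G| = [L':K] = r$, so by Theorem~\ref{thm.papp} and Proposition~\ref{prop.vlambda}, $V(\mu)$ is the simple two-sided vector space associated to $\mu^G$, with left dimension $r$ and right dimension $n$.

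For $V(\lambda)^{*} \cong V(\mu)$ I would define
$$\phi : V(\mu) \longrightarrow V(\lambda)^{*}, \qquad \phi(w)(v) := \operatorname{tr}_{L'/K}\!\bigl(\lambin(v) \cdot w\bigr),$$
for $v \in V(\lambda) = L$ and $w \in V(\mu) = L'$. Using $\lambin \circ \lambda = \Id_K$ and $\lambin|_K = \mu$, a direct computation shows $\phi(w)$ is right-$K$-linear on $V(\lambda)$ and that $\phi$ intertwines the bimodule structures. Under the identification $v \leftrightarrow \lambin(v)$, the map $\phi$ becomes the standard trace pairing $L' \to \Hom_K(L', K)$, which is non-degenerate because $L'/K$ is finite separable; injectivity together with equality of left $K$-dimensions (both equal to $r$) yields the isomorphism. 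The isomorphism ${}^{*}V(\lambda) \cong V(\mu)$ is obtained symmetrically via $\psi(w)(v) := \operatorname{tr}_{L/K}\bigl(v \cdot \overline{\lambda}(w)\bigr)$, using separability of $L/K$.

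Finally, by Proposition~\ref{prop.vlambda} applied to $\lambda$, $V(\lambda)$ has rank $n$ iff $[L:\lambda(K)] = n$; while $V(\mu)$, having left dimension $r$ and right dimension $n$, has rank $n$ iff $r = n$, i.e.\ iff $[L:\lambda(K)] = n$. I expect the main technical obstacle to be choosing the correct twist in the trace formulas so that the bimodule conventions for ${}^{*}V$ and $V^{*}$ are correctly intertwined; once this bookkeeping is settled, the core of the argument reduces to the non-degeneracy of the classical trace pairing for a separable field extension, which is exactly what $K$ being perfect provides.
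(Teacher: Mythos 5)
Your proof is correct, but it takes a genuinely different route from the paper's. The paper's argument is computational: it uses the explicit description of the simple module from Proposition~\ref{prop.vlambda} via the structure constants $\beta_{ijk}$ of (\ref{eqn.simpledata}), constructs the maps $\overline{\mu}_i$ as $\overline{K}$-linear combinations of the $G$-conjugates of $\overline{\mu}$, shows they form a left basis of $V(\lambda)^*$, and then verifies by a matrix calculation (via Lemma~\ref{lemma.mult}) that the right action in this basis is given by the $\phi$ of (\ref{eqn.phi}) for $\mu$, so that $V(\lambda)^* \cong K^m_\phi \cong V(\mu)$; the left-dual case is handled similarly after rewriting $V(\lambda)$ as $ {}_{\mu}\mu(K) \vee K_{\operatorname{id}}$ via Lemma~\ref{lemma.newform}. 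You instead package everything through the nondegeneracy of the trace form of the finite separable extensions $K(\lambda)/K$ and $K(\mu)/K$, defining a single explicit bimodule map $V(\mu) \to V(\lambda)^*$ (and symmetrically to ${}^*V(\lambda)$) and concluding by an injectivity-plus-dimension-count argument. I checked the intertwining computations (using $\lambin \circ \lambda = \operatorname{id}_K$, $\lambin|_K = \mu$, $\overline{\lambda}(\mu(b)) = b$, and $K$-linearity and symmetry of the trace), the domain checks ($\lambin(L) = L'$ and $\overline{\lambda}(L') = L$), and the dimension counts supplied by Lemma~\ref{lemma.functorial}, and they all go through; your Galois-theoretic justification that $|\mu^G| = [K(\mu):K]$ (via the orbit–stabilizer relation in the profinite group $G$) is also correct and cleanly isolates where perfectness of $K$ enters. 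Your approach is shorter and more conceptual; it avoids the structure-constant bookkeeping and makes the role of separability transparent as nondegeneracy of the trace pairing, at the mild cost of invoking standard field-theoretic machinery (infinite Galois correspondence and the trace form) that the paper's proof does not rely on. One small presentational remark: for ${}^*V(\lambda) \cong V(\mu)$ the dimension you should match is the common right $K$-dimension $n$ (rather than the left dimension $r$), since Lemma~\ref{lemma.functorial} gives $\dim_K\bigl(({}^*V(\lambda))_K\bigr) = \dim_K\bigl({}_K V(\lambda)\bigr) = n$ while $\dim_K\bigl(V(\mu)_K\bigr) = [K(\mu):\mu(K)] = n$; this is presumably what you meant by the bookkeeping caveat at the end, and it is the only adjustment needed.
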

In Section \ref{sec.ncsym}, we recall the definition of the non-commutative symmetric algebra of a rank $n$ two-sided vector space, $V$, over $K$ (from \cite{p1bundle}).  Using Theorem \ref{thm.introthm}, we prove it exists if $K$ is perfect (Corollary \ref{cor.ncsym}).  Existence of the non-commutative symmetric algebra amounts to showing that the iterated duals of $V$ have rank $n$.

D. Patrick also studied the question of when a non-commutative symmetric algebra of $V$ exists in the rank 2 case \cite{P}.  He then analyzed the structure of the algebra (when $V$ is not simple) and computed its field of fractions.  His notion of non-commutative symmetric algebra predates (and is distinct from) the one used in this paper.  For the precise relationship between the two notions, see \cite[Section 5.2]{p1bundle}.

Some of the results in this paper are part of the first author's Ph.D. thesis \cite{john} written under the supervision of the second author.

{\it Acknowledgement: } We thank D. Chan and S.P. Smith for interesting conversations which led to improvements in this paper and we thank Nikolaus Vonessen for carefully reading an earlier version of this paper and making numerous suggestions for its improvement.

\section{Simple two-sided vector spaces} \label{sec.twosided}

\subsection{Preliminaries on two-sided vector spaces}
Throughout the paper, we let $k \subset K$ be a field extension, and we let $\bar K$ denotes
a fixed algebraic closure of $K$.  By a \emph{two-sided vector
space} we mean a $K \otimes_{k}K$-module $V$.  By {\it right (resp. left) multiplication by $K$} we mean multiplication by elements in $1 \otimes_{k} K$ (resp. $K \otimes_{k} 1$).  We denote the restriction of scalars of $V$ to $K \otimes_{k} 1$ (resp. $1\otimes_{k} K$) by ${}_{K}V$ (resp. $V_{K}$).

If $V$ is a two-sided vector space, then right multiplication by
$x\in K$ defines an endomorphism $\phi(x)$ of $_KV$, and the right
action of $K$ on $V$ is via the $k$-algebra homomorphism
$\phi:K\rightarrow \End(_KV)$.  This observation motivates the
following

\begin{defn} Let $\phi:K\rightarrow M_n(K)$ be a nonzero homomorphism.
Then we denote by $K^n_\phi$ the two-sided vector space of left
dimension $n$, where the left action is the usual one and the
right action is via $\phi$;
 that is,
\begin{equation}x\cdot(v_1,\dots, v_n)=(xv_1,\dots,xv_n),
\ \ \ (v_1,\dots, v_n)\cdot
x=(v_1,\dots,v_n)\phi(x).\end{equation} We shall always write
scalars as acting to the left of elements of $K^n_\phi$ and
matrices acting to the right; thus, elements of $K^n$ are written
as row vectors.
\end{defn}

If $V$ is a two-sided vector space with left dimension equal to
$n$, then choosing a left basis for $V$ shows that $V\cong
{K^n_\phi}$ for some homomorphism $\phi:K\rightarrow M_n(K)$.

We say $V$ has {\it rank $n$} if $\operatorname{dim}_{K}({}_{K}V)
= \operatorname{dim}_{K}(V_{K})=n$.  If $V$ has rank $n$, then $V$ has a
{\it simultaneous basis}, i.e. a subset $\{y_1,\ldots,y_n\}$
which is a basis for both the left and right action of $K$
on $V$ \cite[p. 18]{P}.

\subsection{Simple two-sided vector spaces and their left and right dimension} \label{sect.babysimple}
In this section, we assume that $K$ is a perfect field in order to employ Theorem \ref{thm.papp}.

We first obtain a convenient description of the simple two-sided vector space
associated to $\lambda \in \operatorname{Emb}(K)$ with $|\lambda^{G}|=n$.
We recall (from \cite[Section 3]{papp}) that the simple two-sided vector space associated to such a $\lambda$ is
$K^{n}_{\phi}$, where $\phi:K \rightarrow M_{n}(K)$ is the $k$-algebra homomorphism defined as follows.  Let
\begin{equation} \label{eqn.simpledata}
\begin{split}
\bullet & \{\alpha_{1},\ldots,\alpha_{n}\} \mbox{ be a basis for } K(\lambda)/K  \\
\bullet & \lambda_{i}:K \rightarrow K \mbox{ be defined by } \lambda(x) = \mbox{$\sum_i \lambda_{i}(x)\alpha_{i}$}, \mbox{ and } \\
\bullet & \beta_{ijk} \mbox{ be defined by the equation $\alpha_{i}\alpha_{j}=\sum_k \beta_{ijk}\alpha_{k}$}. \\
\end{split}
\end{equation}
We define
\begin{equation} \label{eqn.phi}
\phi_{ij}(x) = \sum_{k=1}^{n}\beta_{jki}\lambda_{k}(x).
\end{equation}

\begin{lemma} \label{lemma.transpose}
Suppose $K^{n}_{\phi}$ is the simple module corresponding to $\lambda$.  Then $K^{n}_{\phi^{T}}\cong K^{n}_{\phi}$.
\end{lemma}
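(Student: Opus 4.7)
The plan is to exhibit an explicit conjugation $\phi(x)^{T}=B\phi(x)B^{-1}$ by a single invertible matrix $B\in M_{n}(K)$ independent of $x$; the matrix $B$ will come from the trace form on $K(\lambda)/K$.

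Since $K$ is perfect, $K(\lambda)/K$ is finite separable, and the trace form $T(v,w):=\operatorname{Tr}_{K(\lambda)/K}(vw)$ on $K(\lambda)$ is non-degenerate. I would let $B:=(T(\alpha_{i},\alpha_{j}))_{i,j}$ be its Gram matrix in the basis $\{\alpha_{1},\ldots,\alpha_{n}\}$; then $B$ is symmetric and invertible. The crucial observation is that, because $T$ is symmetric, multiplication by any $y\in K(\lambda)$ is self-adjoint with respect to $T$: $T(yv,w)=\operatorname{Tr}_{K(\lambda)/K}(yvw)=T(v,yw)$.

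Next I would unpack (\ref{eqn.phi}) to verify that, under the convention $\mu_{\lambda(x)}(\alpha_{j})=\sum_{i}\phi(x)_{ij}\alpha_{i}$, the matrix $\phi(x)$ is precisely the matrix of multiplication by $\lambda(x)\in K(\lambda)$ in the basis $\{\alpha_{1},\ldots,\alpha_{n}\}$. Translating the self-adjointness of $\mu_{\lambda(x)}$ with respect to $B$ into matrix form then yields $\phi(x)^{T}B=B\phi(x)$, i.e., $\phi(x)^{T}=B\phi(x)B^{-1}$ for every $x\in K$. Finally, I would define $f\colon K^{n}_{\phi}\to K^{n}_{\phi^{T}}$ by $f(v):=vB^{-1}$; this map is visibly left $K$-linear and bijective, and the displayed identity gives $f(v\phi(x))=v\phi(x)B^{-1}=vB^{-1}\phi(x)^{T}=f(v)\phi^{T}(x)$, so $f$ intertwines the right $K$-actions and is therefore an isomorphism of two-sided vector spaces.

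The only concrete calculation is the verification that $\phi(x)$ really is the matrix of $\mu_{\lambda(x)}$ in the chosen index convention; this is a routine bookkeeping exercise using $\lambda(x)=\sum_{k}\lambda_{k}(x)\alpha_{k}$ and $\alpha_{j}\alpha_{k}=\sum_{l}\beta_{jkl}\alpha_{l}$. Apart from this, everything is a formal consequence of the self-adjointness of the trace form, and I do not foresee any substantive obstacle.
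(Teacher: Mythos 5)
Your proof is correct, but it takes a genuinely different route from the paper's. The paper invokes a diagonalization result from \cite{papp}: there is $A\in\operatorname{GL}_n(\overline K)$ with $A\phi A^{-1}=\operatorname{diag}(\sigma_1\lambda,\ldots,\sigma_n\lambda)$; since the diagonal matrix equals its own transpose, $\phi$ and $\phi^T$ are similar over $\overline K$, and then one appeals to the fact that similarity of matrices (or of their entrywise-evaluated families) over $\overline K$ implies similarity over $K$. Your argument instead recognizes $\phi(x)$ as the matrix of the multiplication-by-$\lambda(x)$ operator on $K(\lambda)$ in the basis $\{\alpha_i\}$ (this identification is correct: by commutativity $\beta_{jki}=\beta_{kji}$, so $\phi_{ij}(x)=\sum_k\beta_{jki}\lambda_k(x)$ is exactly the $(i,j)$-entry of that matrix), and then uses self-adjointness of multiplication operators with respect to the trace form to produce an explicit $B\in\operatorname{GL}_n(K)$ with $\phi(x)^T=B\phi(x)B^{-1}$; note that non-degeneracy of the trace form does use that $K$ is perfect (hence $K(\lambda)/K$ separable), which is a standing assumption in this section. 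What your approach buys is an explicit conjugating matrix over $K$ itself, sidestepping both the diagonalization over $\overline K$ and the descent of similarity from $\overline K$ to $K$; what the paper's buys is brevity, since the diagonalization result is already available from \cite{papp}. One small bookkeeping point worth stating explicitly in your write-up: with the paper's row-vector convention, the isomorphism $f(v)=vB^{-1}$ intertwines the right actions precisely because $\phi(x)B^{-1}=B^{-1}\phi(x)^T$, which is the transpose of the relation you derived, so the sign of the conjugation matches the convention.
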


\begin{proof}
Suppose $K^{n}_{\phi}$ corresponds to $\lambda \in \operatorname{Emb}(K)$, and $\{\sigma_{1}\lambda,\ldots,\sigma_{n}\lambda\}=\lambda^{G}$.  Then there exists $A \in \operatorname{GL}_{n}(\overline{K})$ such that $A \phi A^{-1} = D := \operatorname{diag }(\sigma_{1}\lambda,\ldots,\sigma_{n}\lambda)$ \cite[Part 1, Step 1 of proof of Proposition 3.5]{papp}.  Therefore, $D=D^{T}=(A^{-1})^{T}\phi^{T}A^{T}$.  It follows that $\phi$ is similar to $\phi^{T}$ over $\overline{K}$ so that $\phi$ is similar to $\phi^{T}$ over $K$.  The result follows.
\end{proof}

\begin{prop} \label{prop.vlambda}
 The two sided vector space $V(\lambda)$ (defined in the introduction) is simple and corresponds (via Theorem \ref{thm.papp}) to $\lambda$.  Therefore, the left dimension of $V(\lambda)$ over $K$ is $[K(\lambda):K]$, while the right dimension of $V(\lambda)$ over $K$ is $[K(\lambda):\lambda(K)]$.
\end{prop}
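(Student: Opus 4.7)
The plan is to identify $V(\lambda)$ with the specific simple model $K^n_\phi$ from \eqref{eqn.simpledata}--\eqref{eqn.phi} via an explicit $K\otimes_k K$-module isomorphism, and then to read off the two dimensions directly from the definition of $V(\lambda)$.

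Using the basis $\{\alpha_1,\dots,\alpha_n\}$ of $K(\lambda)/K$ appearing in \eqref{eqn.simpledata}, I would define $T:V(\lambda)\to K^n$ by $\sum_i c_i\alpha_i\mapsto (c_1,\dots,c_n)$; this is tautologically a left $K$-module isomorphism. To determine the right $K$-structure it induces, I expand $v\cdot b=v\lambda(b)$ using $\lambda(b)=\sum_k \lambda_k(b)\alpha_k$ and $\alpha_i\alpha_k=\sum_j \beta_{ikj}\alpha_j$; the $j$-th coordinate of the result is $\sum_{i,k}c_i\beta_{ikj}\lambda_k(b)$, which, by comparison with \eqref{eqn.phi}, equals the $j$-th entry of $T(v)\cdot\phi^T(b)$. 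Hence $T$ is a $K\otimes_k K$-module isomorphism $V(\lambda)\cong K^n_{\phi^T}$, and Lemma \ref{lemma.transpose} gives $K^n_{\phi^T}\cong K^n_\phi$. Since $K^n_\phi$ is by construction the simple two-sided vector space associated to $\lambda$ via Theorem \ref{thm.papp}, this shows that $V(\lambda)$ is simple and corresponds to $\lambda$.

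The dimension statements then follow immediately from the definition of $V(\lambda)$. The left $K$-module structure on $V(\lambda)$ is the $K$-vector space structure on $K(\lambda)$, so the left dimension is $[K(\lambda):K]$. The right $K$-action factors as $K\xrightarrow{\lambda}\lambda(K)\hookrightarrow K(\lambda)$ followed by multiplication in $K(\lambda)$, so any $\lambda(K)$-basis of $K(\lambda)$ is simultaneously a right $K$-basis of $V(\lambda)$, giving the right dimension as $[K(\lambda):\lambda(K)]$. The only genuine subtlety is the index transposition that arises when matching the row-vector convention used to define $K^n_\phi$ against the field-multiplication convention built into $V(\lambda)$; this is precisely why the isomorphism naturally lands in $K^n_{\phi^T}$ rather than $K^n_\phi$, and why the appeal to Lemma \ref{lemma.transpose} is required.
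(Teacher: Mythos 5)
Your argument is correct and is essentially the paper's own proof: both compute the right-action matrix of $V(\lambda)$ in the left basis $\{\alpha_1,\dots,\alpha_n\}$ to obtain $V(\lambda)\cong K^n_{\phi^T}$, then invoke Lemma \ref{lemma.transpose} to pass to $K^n_\phi$, with the dimension statements read off from Theorem \ref{thm.papp} and the definition of $V(\lambda)$. The only difference is that you make the isomorphism $T$ explicit and phrase the computation coordinate-wise, which is a presentational choice rather than a different argument.
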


\begin{proof}
We show that $V(\lambda) \cong K^{n}_{\phi^{T}}$.  The result will then follow from Lemma \ref{lemma.transpose}.

We need only compute the matrix for the right action of $K$ on the left basis $\{\alpha_{1},\ldots,\alpha_{n}\}$ of $V(\lambda)$.  We have
$$
\alpha_{j} \cdot x := \alpha_{j} \lambda(x) = \sum_{i=1}^{n}\alpha_{j}\lambda_{i}(x)\alpha_{i} = \sum_{i=1}^{n} \lambda_{i}(x)(\sum_{k=1}^{n} \beta_{ijk}\alpha_{k}) = \sum_{k=1}^{n}(\sum_{i=1}^{n}\lambda_{i}(x)\beta_{ijk})\alpha_{k}.
$$
Thus, in the left-basis $\{\alpha_{1},\ldots,\alpha_{n}\}$, the $jk$ entry of the right action matrix is $\sum_{i=1}^{n}\lambda_{i}\beta_{ijk}$ and therefore, the matrix has $ij$-entry $\sum_{k=1}^{n}\lambda_{k}\beta_{kij}=\sum_{k=1}^{n}\beta_{ikj}\lambda_{k}$.  This is just the $ij$-entry of $\phi^{T}$, so that $V(\lambda) \cong K^{n}_{\phi^{T}}$.
\end{proof}

\subsection{A five-dimensional family of simple two-sided vector spaces} \label{sec.family}
For the remainder of Section \ref{sec.twosided}, we assume $\operatorname{char }k=0$ and $t$ is transcendental over $k$.

\begin{lemma} \label{lemma.numerical}
Let $m=\frac{at^2+bt+c}{dt^2+et+f}$ with $a,b,c,d,e,f \in k$, $a,d$ not both zero, and $ae=bd$, $af \neq cd$, $b^2 \neq 4ac$, and $e^2 \neq 4df$.  Then $[k(t):k(m)]=2$ and $\sqrt{m} \notin k(t)$.
\end{lemma}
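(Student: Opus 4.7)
The plan is to invoke the standard fact that if $m = P(t)/Q(t) \in k(t)$ with $P, Q \in k[t]$ coprime and not both constant, then $[k(t):k(m)] = \max(\deg P, \deg Q)$ (and $m$ is transcendental over $k$). Writing $P := at^{2}+bt+c$ and $Q := dt^{2}+et+f$, I would first show that $a$ and $d$ are both nonzero (so that $\deg P = \deg Q = 2$), then that $\gcd(P,Q) = 1$; this gives $[k(t):k(m)] = 2$. The claim $\sqrt{m} \notin k(t)$ then follows from a separate lowest-terms argument that uses $b^{2} \neq 4ac$.

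For the nonvanishing of $a$ and $d$: if $a = 0$, then $d \neq 0$ by hypothesis, and $ae = bd$ forces $b = 0$, contradicting $b^{2} \neq 4ac$; the symmetric argument using $e^{2} \neq 4df$ rules out $d = 0$. Now set $\mu := d/a$; the relation $ae = bd$ becomes $e = \mu b$, so $Q - \mu P = f - \mu c$ is a constant. This constant is nonzero, since $f = \mu c$ would give $af = cd$, contrary to hypothesis. Hence $\gcd(P, Q) = \gcd(P, Q - \mu P) = \gcd(P, f - \mu c) = 1$, and the degree formula yields $[k(t):k(m)] = 2$.

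For $\sqrt{m} \notin k(t)$, suppose to the contrary that $\sqrt{m} = u/v$ with $u, v \in k[t]$ coprime. Then $u^{2}/v^{2} = P/Q$, and since both fractions are in lowest terms there exists $c_{1} \in k^{\times}$ with $u^{2} = c_{1} P$. Because $\deg u^{2} = 2$, $u = pt + q$ is linear with $p \neq 0$; comparing coefficients in $(pt+q)^{2} = c_{1}(at^{2}+bt+c)$ and eliminating $c_{1}$ from the three equations $p^{2} = c_{1}a$, $2pq = c_{1}b$, $q^{2} = c_{1}c$ yields $b^{2} = 4ac$, contradicting the hypothesis.

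I do not anticipate any real obstacle; the substance of the argument is simply unpacking what the three numerical conditions say. The conditions $ae = bd$, $af \neq cd$, and $(a,d) \neq (0,0)$ together force the rows $(a,b)$ and $(d,e)$ to be parallel while $(a,b,c)$ and $(d,e,f)$ remain $k$-linearly independent, which is precisely what is needed for $P$ and $Q$ to be coprime of degree $2$; and $b^{2} \neq 4ac$ then says that $P$ itself is not a scalar multiple of a perfect square, which is what prevents $m$ from becoming a square in $k(t)$.
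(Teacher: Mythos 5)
Your proof is correct, and it differs from the paper's in a few pleasant ways. For coprimality of $P=at^{2}+bt+c$ and $Q=dt^{2}+et+f$, the paper supposes a common root $u$ in an extension field, multiplies through by $a$ and $d$, and uses $ae=bd$ to force $af=cd$; you instead observe directly that $ae=bd$ makes $Q-\tfrac{d}{a}P$ a constant, and $af\neq cd$ makes that constant nonzero, so the Euclidean algorithm terminates immediately with $\gcd(P,Q)=1$. This is a genuine simplification that also makes the structural meaning of the hypotheses transparent, as you note at the end. Second, you establish $a\neq 0$ and $d\neq 0$ at the outset using $ae=bd$ (if $a=0$ then $b=0$, killing $b^{2}\neq 4ac$, and symmetrically for $d$); the paper instead deduces $a,d\neq 0$ partway through the $\sqrt{m}$ argument by comparing parities of degrees in the cleared-denominators identity, and separately has to check $m\notin k$ at the start. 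Your ordering collapses both of those steps into one clean observation, and then the standard degree formula $[k(t):k(m)]=\max(\deg P,\deg Q)$ (which the paper also invokes implicitly) gives the degree $2$ directly. The final contradiction for $\sqrt{m}\notin k(t)$ — scaling the numerator of a lowest-terms square to $P$ and reading off $b^{2}=4ac$ — is essentially identical in both proofs, though yours gets there a bit faster because you can immediately cite $\deg u^{2}=\deg P=2$ rather than re-deriving the degree bounds. In short: same underlying ideas, but your decomposition is leaner and each hypothesis is used exactly once in a visibly purposeful way.
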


\begin{proof}
If $m$ were in $k$, then $a=dm$, $c=fm$, so $af=dmf=cd$, a contradiction.  Hence, $m \notin k$.

If $\operatorname{gcd}(at^2+bt+c,dt^2+et+f) \neq 1$, then these two polynomials have a common root $u$ in some extension field of $k$.  So
$$
au^2+bu+c=0=du^2+eu+f.
$$
Hence
$$
adu^2+bdu+cd=0=adu^2+aeu+af.
$$
Since $bd=ae$, it follows that $af=cd$, a contradiction.  Since $a$ or $d$ is nonzero, it follows that $[k(t):k(m)]=2$.

Now suppose $\sqrt{m} \in k(t)$.  Then there exist polynomials $p,q \in k[t]$ such that $m=\frac{p^2}{q^2}$. We may assume $\operatorname{gcd}(p,q)=1$.  Now
\begin{equation} \label{eqn.star}
p^2(dt^2+et+f)=q^2(at^2+bt+c).
\end{equation}
Since $\operatorname{gcd}(p,q)=1$,
$$
p^2|at^2+bt+c, q^2|dt^2+et+f.
$$
Hence, $\operatorname{deg }p,q \leq 1$.

Suppose $a=0$.  Then $d \neq 0$, so that the left hand side of (\ref{eqn.star}) has even degree.  Hence the right hand side of (\ref{eqn.star}) has even degree, implying $b=0$.  Hence, $b^2=0=4ac$, a contradiction.  Thus $a \neq 0$.  Similarly, if $d=0$ then $e=0$, so that $e^2=0=4df$, a contradiction.  Consequently both $a \neq 0$ and $d \neq 0$.

It now follows from (\ref{eqn.star}) that $\operatorname{deg }p=\operatorname{deg }q$.  If $\operatorname{deg }p=\operatorname{deg }q=0$, then $m=\frac{p^2}{q^2} \in k$, a contradiction.  Hence, $\operatorname{deg }p=\operatorname{deg }q=1$.  Now $at^2+bt+c=u_{1}p^2$ for some $u_{1} \in k$.  If $t+x$ ($x \in k$) is the monic polynomial associated to $p$, then
$$
at^2+bt+c=u_{2}(t+x)^2,
$$
for some $u_{2} \in k$.  Clearly, $u_{2}=a$.  We obtain
$$
at^2+bt+c=at^2+2axt+ax^2.
$$
Hence $b=2ax$, $c=ax^2$, and $b^2=4a(ax^2)=4ac$, a contradiction.  This final contradiction concludes the proof.
\end{proof}

We now prove Proposition \ref{prop.john}.  Retain the notation from Lemma \ref{lemma.numerical}.
By Lemma \ref{lemma.numerical}, $[k(t):k(m)]=2$, so that $[k(t,\sqrt{m}):k(\sqrt{m})]=2$.  Since
$\alpha \in k$, we may conclude that $[k(t,\sqrt{m}):k(\alpha+\sqrt{m})]=2$.  Therefore, $[K(\lambda):\lambda(K)]=2$.
On the other hand, Lemma \ref{lemma.numerical} implies that $[k(t,\sqrt{m}):k(t)]=2$ so that $[K(\lambda):K]=2$.
Therefore, the simple two-sided vector space corresponding to $\lambda$ has rank 2.

\section{Duals of two-sided vector spaces} \label{sec.duals}
In order to define the non-commutative symmetric algebra of a two-sided vector space, we need to recall
(from \cite[Section 4]{p1bundle}) and study the
notion of the left and right dual of a two-sided vector space.  Our main goal in this section is to determine a
formula for the dual of a simple two-sided vector space.  This goal is realized in Theorem \ref{thm.dualformula}.

For any commutative ring $R$, we let ${\sf Mod }R$ denote the category of $R$-modules.  We let ${\sf R}$ (resp. ${\sf L}$) denote the full subcategory of
${\sf Mod }K \otimes_{k}K$ consisting of modules which are
finite-dimensional over $K$ on the right (resp. left).

\subsection{Duals and their basic properties}
\begin{defn} \label{defn.rightdual}
The {\it right dual of $V$}, denoted $V^{*}$, is the set
$\operatorname{Hom}_{K}(V_{K},K)$ with action $(a \cdot \psi \cdot
b)(x)=a\psi(bx)$ for all $\psi \in
\operatorname{Hom}_{K}(V_{K},K)$ and $a,b \in K$.  We note that
$V^{*}$ is a $K\otimes_{k}K$-module since $V$ is.

The {\it left dual of $V$}, denoted ${}^{*}V$, is the set
$\operatorname{Hom}_{K}({}_{K}V,K)$ with action $(a \cdot \phi
\cdot b)(x)=b \phi(xa)$ for all $\phi \in
\operatorname{Hom}_{K}({}_{K}V,K)$ and $a,b \in K$.  As above,
${}^{*}V$ is a $K \otimes_{k}K$-module.

We set
$$
V^{i*}:=
\begin{cases}
V & \text{if $i=0$}, \\
(V^{(i-1)*})^{*} & \text{ if $i>0$}, \\
{}^{*}(V^{(i+1)*}) & \text{ if $i<0$}.
\end{cases}
$$
\end{defn}
We may also form the usual dual of a two-sided vector space $V$, defined by $\check{V} = \operatorname{Hom}_{K \otimes_{k}K}(V,K \otimes_{k}K)$ with its usual $K\otimes_{k}K$-module structure.  The following example shows that it is not always true that $\check{V} \cong V^{*}$.
\begin{example}
Let $x,y$ be algebraically independent transcendentals over $k$, and let $K=k(x)$.  Then $K \otimes_{k}K$ is a domain since it is a subring of $k(x,y)$.  In addition, let $\sigma$ denote a nontrivial $k$-linear automorphism of $K$ and suppose $c \in K$ is not a fixed point.

We define a two-sided vector space $V$ with underlying set $K$ and $K\otimes_{k}K$-action
induced by the formula $(a\otimes b) \cdot x := a \sigma(b) x$.

Since $V$ is simple, $\check{V}$ is either equal to $0$, or $V$ embeds in $K \otimes_{k}K$.
 But $K \otimes_{k}K$ is a domain, and for $v \in V$, $(1\otimes c - \sigma(c) \otimes 1) \cdot v = 0$.
  Therefore, $V$ cannot embed in $K \otimes_{k}K$ as a $K \otimes_{k}K$-module.  The contradiction establishes the fact that $\check{V} = 0$.  On the other hand, we leave it as an exercise to check that $V^{*}$ is isomorphic to the $K \otimes_{k}K$-module with underlying set $K$ and action induced by the formula $(a \otimes b) \cdot x := a \sigma^{-1}(b) x$.
\end{example}
It is straightforward to check that the assignments on objects $(-)^{*}:{\sf Mod }K \otimes_{k}K \rightarrow {\sf
Mod }K \otimes_{k}K$ and ${}^{*}(-):{\sf Mod }K \otimes_{k}K
\rightarrow {\sf Mod }K \otimes_{k}K$ canonically induce contravariant
left exact functors.

\begin{lemma} \label{lemma.functorial}
The functors $(-)^{*}$ and ${}^{*}(-)$ restrict to exact functors
$(-)^{*}:{\sf R} \rightarrow {\sf L}$ and ${}^{*}(-):{\sf L}
\rightarrow {\sf R}$.
\end{lemma}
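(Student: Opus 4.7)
The plan is to check two things: first, that each functor sends its source category into the claimed target category, and second, that each preserves short exact sequences.

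For the first point, suppose $V \in {\sf R}$ with $\dim_K V_K = n$. Then as a set $V^{*} = \operatorname{Hom}_{K}(V_{K}, K)$ is an $n$-dimensional $K$-vector space. By Definition \ref{defn.rightdual}, the left $K$-action on $V^{*}$ is $(a \cdot \psi)(x) = a\psi(x)$, which identifies ${}_{K}(V^{*})$ with this same $n$-dimensional space, so $V^{*} \in {\sf L}$. A symmetric dimension count, using the right $K$-action $(\phi \cdot b)(x) = b\phi(x)$ on ${}^{*}V$, shows ${}^{*}(-)$ sends ${\sf L}$ into ${\sf R}$ with $\dim_{K} ({}^{*}V)_{K} = \dim_{K}({}_{K}V)$.

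For the second point, the excerpt has already noted that both functors are contravariant left exact as functors on all of ${\sf Mod}\,K \otimes_{k} K$. Thus the only thing left to verify is that an injection $V' \hookrightarrow V$ in ${\sf R}$ yields a surjection on duals $V^{*} \twoheadrightarrow (V')^{*}$, and symmetrically for ${}^{*}(-)$. But the underlying $K$-linear injection $V'_{K} \hookrightarrow V_{K}$ is an injection of finite-dimensional right $K$-vector spaces and therefore splits; consequently any $\psi' \in \operatorname{Hom}_{K}(V'_{K}, K)$ extends to some $\psi \in \operatorname{Hom}_{K}(V_{K}, K)$. The restriction map $V^{*} \to (V')^{*}$ is already $K \otimes_{k} K$-linear by functoriality, so this set-theoretic extendability is precisely what we need. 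The argument for ${}^{*}(-)$ is identical with the roles of ${\sf L}$ and ${\sf R}$ swapped.

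There is no serious obstacle here; the only point worth flagging, barely a subtlety, is that the splitting of $V'_{K} \hookrightarrow V_{K}$ need only respect the right $K$-action and not the full $K \otimes_{k} K$-module structure, since all we extract from it is the extension of a single $K$-linear functional. The result rests ultimately on the elementary fact that fields are self-injective, together with the finiteness hypothesis built into ${\sf R}$ and ${\sf L}$ which guarantees that the Hom-sets being dualized are themselves finite-dimensional.
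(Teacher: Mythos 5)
Your proof is correct and rests on the same essential fact as the paper's: that $\operatorname{Hom}_K(-,K)$ is exact on $K$-vector spaces because $K$ is self-injective, with exactness of the $K\otimes_k K$-module sequence checkable after restriction of scalars. The paper phrases this slightly more compactly (exactness of the composite of restriction of scalars with the vector-space dual), while you reduce to the single remaining point of right-exactness and spell out the splitting; these are the same argument with different emphasis.
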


\begin{proof}
We prove the lemma for $(-)^{*}$. The proof for ${}^{*}(-)$ is
similar and omitted.  Since $V \in {\sf R}$ implies that $\operatorname{dim
}_{K}({}_{K}(V^{*}))=\operatorname{dim }_{K}(V_{K})$ is finite, $(-)^{*}$ restricted to ${\sf R}$ takes
values in ${\sf L}$.

To prove the restriction of $(-)^{*}$ to ${\sf R}$ is exact, it suffices to show that the functor $\operatorname{Hom}_{K}(-,K):{\sf R} \rightarrow {\sf Mod }K$ taking an object $V$ to the usual dual of $V_{K}$ is exact.  This follows from the exactness of the restriction of scalars functor sending an object $V$ in ${\sf R}$ to $V_{K}$ in ${\sf Mod }K$.
\end{proof}

\subsection{Adjoint pairs from duals}
As one might expect, if $V$ has rank $n$ then both pairs of functors $(-\otimes_{K}{}^{*}V,-\otimes_{K}V)$ and $(-\otimes_{K}V,-\otimes_{K}V^{*})$ from ${\sf Mod }K$ to ${\sf Mod }K$ are adjoint pairs.  This is mentioned in \cite[Section 4]{p1bundle} and left as an exercise for the reader.  For completeness, and in order to spare the reader from verifying that the proof is independent of the geometric assumptions made in \cite{p1bundle}, we include it here (in Proposition \ref{prop.adjoint}).  We note that the conclusion of Proposition \ref{prop.adjoint} remains true in case $V$ has distinct finite left and right dimension.  We leave the verification of this fact to the interested reader.

In our proof of Proposition \ref{prop.adjoint} and related facts, we will use the following notation for the rest of the section.  Let $F$ denote the ring of additive functions from $K$ to $K$
under composition and point-wise addition and let $M_{n}(F)$ denote
the corresponding matrix ring.  Let $V$ be a two-sided vector space of rank $n$, and let $y_{1},\ldots, y_{n}$ be a simultaneous basis for $V$.
Following \cite[Section 1]{P}, we define a matrix $A
\in M_n(F)$ as follows.  For $\alpha \in K$,
$$
y_{i}\alpha = \sum_{j}a_{ij}(\alpha)y_{j}.
$$
We call $A$ the {\it matrix representing right multiplication on $V$ in the
left basis $\{y_i\}$}.

Similarly, we define a matrix $B \in M_n(F)$ as follows.  For
$\delta \in K$,
$$
\delta y_{i}= \sum_j y_{j}b_{ji}(\delta).
$$
We call $B$ the {\it matrix representing left multiplication on
$V$ in the right basis $\{y_{i}\}$}.

Let $\{\psi_{i}\} \subset V^{*}$ be defined by
$\psi_{i}(y_{j})=\delta_{ij}$, and let $\{\phi_{i} \} \subset {}^{*}V$
be defined similarly.

\begin{lemma} \label{lemma.matrix}
The matrix representing right multiplication on $V^{*}$ in the left basis $\{\psi_{i}\}$ is
$B$ and the matrix representing left multiplication on ${}^{*}V$ in the right basis
$\{\phi_{i}\}$ is $A$.
\end{lemma}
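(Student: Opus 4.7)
The plan is to prove both statements by direct evaluation on the dual bases, using the fact that $\{\psi_i\}$ is a left $K$-basis for $V^*$ and $\{\phi_i\}$ is a right $K$-basis for ${}^*V$ (both have length $n$ since $V$ has rank $n$). Everything reduces to carefully unwinding the definitions of the dual actions and comparing coefficients.

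For the first assertion, I would write $\psi_{i} \cdot \alpha = \sum_{k} c_{ik}(\alpha) \psi_{k}$ for scalars $c_{ik}(\alpha) \in K$, so that evaluating on $y_j$ gives $(\psi_{i} \cdot \alpha)(y_{j}) = c_{ij}(\alpha)$ (since the left $K$-action on $V^*$ is by post-multiplication of values). On the other hand, by the definition of the right dual action, $(\psi_{i} \cdot \alpha)(y_{j}) = \psi_{i}(\alpha y_{j})$. Expanding $\alpha y_{j} = \sum_{k} y_{k} b_{kj}(\alpha)$ (using that $\{y_i\}$ is simultaneously a right basis) and applying the right $K$-linearity of $\psi_{i}$ gives $\psi_{i}(\alpha y_{j}) = \sum_{k} \psi_{i}(y_{k}) b_{kj}(\alpha) = b_{ij}(\alpha)$. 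Comparing yields $c_{ij}(\alpha) = b_{ij}(\alpha)$, so the matrix is $B$.

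For the second assertion, I would mirror the argument: write $\alpha \cdot \phi_{i} = \sum_{k} \phi_{k} \cdot d_{ki}(\alpha)$, then evaluate on $y_j$. Since the right action on ${}^*V$ simply post-multiplies values, the right-hand side evaluated at $y_j$ is $d_{ji}(\alpha)$. For the left-hand side, the definition gives $(\alpha \cdot \phi_{i})(y_{j}) = \phi_{i}(y_{j} \alpha)$; expanding $y_{j} \alpha = \sum_{k} a_{jk}(\alpha) y_{k}$ and using the left $K$-linearity of $\phi_{i}$ gives $a_{ji}(\alpha)$, so $d_{ji} = a_{ji}$ and the matrix is $A$.

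There is no real obstacle here beyond bookkeeping — the only thing to keep straight is which side of a scalar one is working on in each of the four spaces ($V$, $V^*$, ${}^*V$, and the functionals' codomain $K$), and in particular that the dual actions are defined so that the left action on $V^*$ and the right action on ${}^*V$ are both just multiplication on output values, so matrix entries can be read off simply as $(\cdot)(y_j)$.
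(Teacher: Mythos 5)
Your proof is correct and takes essentially the same route as the paper's: a direct computation unwinding the dual-action definitions and the definitions of $A$ and $B$, and comparing coefficients. The only cosmetic difference is that you evaluate functionals on the basis elements $y_j$ while the paper evaluates on a general element $\sum_j y_j a_j$; and you spell out the ${}^*V$ computation that the paper leaves as ``similar.''
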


\begin{proof}
Suppose $a_{1},\ldots,a_{n}, \alpha, \delta$ are in $K$.  We
compute
$$
\psi_{i} \cdot \delta (\sum_j y_{j}a_{j})  =  \psi_{i}(\sum_j \delta y_{j}a_{j})  =  \psi_{i}(\sum_j(\sum_k y_{k} b_{kj}(\delta)a_{j})) = \sum_j b_{ij}(\delta)a_{j}.
$$
Since the last expression equals $\sum_j b_{ij}(\delta)\psi_{j}(\sum_l y_{l}a_{l})$, we conclude that $\psi_{i} \cdot \delta = \sum_j b_{ij}(\delta)\psi_{j}$.
A similar computation establishes the equality $\alpha \cdot \phi_{i} = \sum_j \phi_{j} a_{ji}(\alpha)$.
\end{proof}

\begin{cor} \label{cor.matrix0}
There are isomorphisms $({}^{*}V)^{*} \cong V$ and
${}^{*}(V^{*}) \cong V$.  Therefore, if $V^{i*}$ has rank $n$, then $(V^{i*})^{*} \cong V^{(i+1)*}$ and ${}^{*}(V^{i*}) \cong V^{(i-1)*}$.
\end{cor}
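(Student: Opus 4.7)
The plan is to establish the two isomorphisms $({}^{*}V)^{*} \cong V$ and ${}^{*}(V^{*}) \cong V$ by the matrix technique of Lemma \ref{lemma.matrix}, and then to derive the iterated-dual identities by a case analysis on the sign of $i$.

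First I would fix the simultaneous basis $\{y_{1}, \ldots, y_{n}\}$ of $V$ together with the associated dual functionals $\{\phi_{i}\} \subset {}^{*}V$. Since $\{y_{i}\}$ is a left basis of $V$, the $\{\phi_{i}\}$ form a right $K$-basis of ${}^{*}V$ of size $n$, so $({}^{*}V)^{*} = \Hom_{K}(({}^{*}V)_{K}, K)$ admits a left $K$-basis $\{\theta_{i}\}$ characterized by $\theta_{i}(\phi_{j}) = \delta_{ij}$. A direct computation in the style of the proof of Lemma \ref{lemma.matrix}, using the left-action formula $\beta \cdot \phi_{i} = \sum_{j}\phi_{j} a_{ji}(\beta)$ supplied by that lemma, shows that the matrix representing right multiplication by $K$ on $({}^{*}V)^{*}$ in the basis $\{\theta_{i}\}$ is again $A$. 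Since $A$ is also the matrix of right multiplication on $V$ in the basis $\{y_{i}\}$, the left $K$-linear extension of $\theta_{i} \mapsto y_{i}$ is a bimodule isomorphism $({}^{*}V)^{*} \to V$. The symmetric calculation with $B$ and $\{\psi_{i}\}$ in place of $A$ and $\{\phi_{i}\}$ then produces ${}^{*}(V^{*}) \cong V$.

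The consequence follows from the recursive definition of $V^{i*}$. When $i \geq 0$, $(V^{i*})^{*} = V^{(i+1)*}$ is immediate, and when $i \leq 0$, ${}^{*}(V^{i*}) = V^{(i-1)*}$ is immediate. The remaining two pairs are obtained by substitution into the main isomorphisms: for $i \geq 1$, ${}^{*}(V^{i*}) = {}^{*}\bigl((V^{(i-1)*})^{*}\bigr) \cong V^{(i-1)*}$, and for $i \leq -1$, $(V^{i*})^{*} = \bigl({}^{*}(V^{(i+1)*})\bigr)^{*} \cong V^{(i+1)*}$. The main delicate point is to verify that the adjacent iterated dual to which we apply the main isomorphism has rank $n$; this is where the hypothesis on $V^{i*}$ is used, together with a short dimension-tracking argument via Lemma \ref{lemma.functorial}. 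I expect this bookkeeping to be the only part that requires genuine care, since the core matrix identification above is essentially a one-step repetition of the calculation in Lemma \ref{lemma.matrix}.
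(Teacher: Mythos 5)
Your construction of the isomorphism $({}^{*}V)^{*}\cong V$ is essentially the same as the paper's. The paper writes down the map $\Phi:V\to({}^{*}V)^{*}$ directly by $\Phi(\sum_i a_iy_i)(\sum_j\phi_jb_j)=\sum_k a_kb_k$ and verifies it respects the right action; the verification reduces to exactly the matrix identity you record (that right multiplication on $({}^{*}V)^{*}$ in the basis dual to $\{\phi_i\}$ is again $A$). Your $\theta_i$ are precisely $\Phi(y_i)$, so the two arguments are the same calculation in slightly different packaging. The paper handles surjectivity by a dimension count (both sides have left dimension $n$), whereas you get bijectivity for free from the basis identification --- either route is fine. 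For ${}^{*}(V^{*})\cong V$ the paper leaves the proof as an exercise; your symmetric $B$-and-$\{\psi_i\}$ argument is the expected one.

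The paper also leaves the ``therefore'' clause as an exercise, and your case analysis correctly isolates the two nontrivial cases (${}^{*}(V^{i*})$ for $i\geq 1$ and $(V^{i*})^{*}$ for $i\leq-1$), each of which requires applying the main isomorphism to the \emph{adjacent} iterate $V^{(i\mp1)*}$ rather than to $V^{i*}$ itself. Here is the one place I would push back: the ``short dimension-tracking argument via Lemma~\ref{lemma.functorial}'' you anticipate will not close the gap as stated. Lemma~\ref{lemma.functorial} only controls one of the two dimensions of a dual (it gives $\dim_K({}_K W^{*})=\dim_K(W_K)$ and $\dim_K(({}^{*}W)_K)=\dim_K({}_KW)$); it says nothing about $\dim_K((W^{*})_K)$ or $\dim_K({}_K({}^{*}W))$. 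So knowing $V^{i*}$ has rank $n$ only tells you one of the two dimensions of $V^{(i-1)*}$, not both, and you cannot conclude $V^{(i-1)*}$ has rank $n$ from that lemma alone. In the paper the fact that taking a dual preserves rank is Proposition~\ref{prop.dual}, which comes later and relies on $K$ being perfect together with the classification via Theorem~\ref{thm.dualformula}. In practice the corollary is invoked (in Definition~\ref{defn.ncsymalg} and Lemma~\ref{lemma.dual}) under the stronger standing hypothesis that \emph{every} $V^{j*}$ has rank $n$, which makes the case analysis go through without the dimension bookkeeping; under that reading your argument is complete. But as a self-contained proof from the single hypothesis ``$V^{i*}$ has rank $n$,'' the bookkeeping step you gesture at is not as short as you expect and would need the later Proposition~\ref{prop.dual}.
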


\begin{proof}
We prove the first statement and leave the second as an exercise.  We define $\Phi:V \rightarrow ({}^{*}V)^{*}$ by letting $\Phi(\sum_{i}a_{i}y_{i})(\sum_{j}\phi_{j}b_{j})=\sum_{k}a_{k}b_{k}$, where
$a_{i}$ and $b_{j}$ are in $K$ for all $i$, $j$.  Note that this formula makes sense since $\{\phi_{i}\}$ is a right basis for ${}^{*}V$.  It is elementary to check that $\Phi$ is additive and
compatible with the left $K$-action.  We show that it is
compatible with the right action.  Suppose $\alpha \in K$.  On the one hand,
\begin{eqnarray*}
\Phi(\sum_{i}a_{i}y_{i}\alpha)(\sum_{k}\phi_{k}b_{k}) & = & \Phi(\sum_{i,j}a_{i}a_{ij}(\alpha)y_{j})(\sum_{k}\phi_{k}b_{k}) \\
& = & \sum_{i,j}a_{i}a_{ij}(\alpha)b_{j}.
\end{eqnarray*}
On the other hand
\begin{eqnarray*}
(\Phi(\sum_{i}a_{i}y_{i})\alpha)(\sum_{k}\phi_{k}b_{k}) & = & \Phi(\sum_{i}a_{i}y_{i})(\sum_{k}\alpha \phi_{k}b_{k}) \\
& = & \Phi(\sum_{i}a_{i}y_{i})(\sum_{k}(\sum_{j} \phi_{j}a_{jk}(\alpha))b_{k}) \\
& = & \sum_{k,j}a_{jk}(\alpha)b_{k}a_{j}.
\end{eqnarray*}
It follows that $\Phi$ is a homomorphism of two-sided vector spaces.  Since $\Phi$ is clearly injective, it remains to check that $\Phi$ is surjective.  To this end, since $\{\phi_{i}\}$ is a right
basis for ${}^{*}V$, its right duals in $({}^{*}V)^{*}$ are a left basis for $({}^{*}V)^{*}$.  Therefore, $({}^{*}V)^{*}$ has left dimension $n$.  It follows that $\Phi$ is surjective.
\end{proof}

\begin{prop} \label{prop.dualmatrix}
The matrices $A^{T}$ and $B$ are inverses in the ring $M_{n}(F)$.
\end{prop}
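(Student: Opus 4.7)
The plan is to deduce both $BA^T = I$ and $A^T B = I$ \emph{each separately}, since the ring $F$ is noncommutative, so a one-sided inverse in $M_n(F)$ need not be two-sided. Both identities will fall out of a single trick: use compatibility of the commuting left and right $K$-actions on $V$, applied to a distinguished element, and extract the identity by comparing its expansion in the left basis $\{y_i\}$ with its expansion in the right basis $\{y_i\}$, invoking uniqueness of coordinates in each.

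For $BA^T = I$, fix $\alpha \in K$ and compute the right-basis coordinates of $y_i \cdot \alpha$ in two ways. Directly, $y_i \cdot \alpha$ is a right-scalar multiple of $y_i$, so its $k$-th right coordinate is $\alpha \delta_{ki}$. Alternatively, start from the left-basis expansion $y_i \cdot \alpha = \sum_j a_{ij}(\alpha) y_j$ and rewrite each summand $a_{ij}(\alpha) \cdot y_j$ using the left-multiplication rule $\gamma \cdot y_j = \sum_k y_k \cdot b_{kj}(\gamma)$ with $\gamma = a_{ij}(\alpha)$. Regrouping yields
\[
y_i \cdot \alpha \;=\; \sum_k y_k \cdot \Bigl( \sum_j b_{kj}(a_{ij}(\alpha)) \Bigr),
\]
and uniqueness of the right-basis expansion forces $\sum_j b_{kj}(a_{ij}(\alpha)) = \alpha \delta_{ki}$ for every $\alpha$. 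This is the identity $\sum_j b_{kj} \circ a_{ij} = \delta_{ki} \cdot \operatorname{Id}_K$ in $F$; since multiplication in $M_n(F)$ is matrix product with entries composed in $F$, and $(A^T)_{ji} = a_{ij}$, this reads $(BA^T)_{ki} = \delta_{ki} \cdot \operatorname{Id}_K$, i.e.\ $BA^T = I$.

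For $A^T B = I$, I run the symmetric calculation starting from $\delta \cdot y_i$. Its $j$-th left-basis coordinate is directly $\delta \delta_{ji}$; meanwhile, the right-basis expansion $\delta \cdot y_i = \sum_k y_k \cdot b_{ki}(\delta)$ combined with the right-multiplication rule $y_k \cdot \gamma = \sum_j a_{kj}(\gamma) y_j$ gives an alternative expression whose $j$-th left coordinate is $\sum_k a_{kj}(b_{ki}(\delta))$. Uniqueness of the left-basis expansion then yields $\sum_k a_{kj} \circ b_{ki} = \delta_{ji} \cdot \operatorname{Id}_K$ in $F$, which is exactly $(A^T B)_{ji} = I_{ji}$.

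The main obstacle is purely clerical: one must be scrupulous about whether each scalar acts on the left or on the right of a $y_j$, and bear in mind that multiplication in $F$ is composition of additive functions rather than pointwise multiplication of $K$-valued functions. Once these bookkeeping conventions are fixed, no ingredient beyond the commutativity of the two $K$-actions and the uniqueness of basis expansions is required.
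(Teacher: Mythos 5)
Your proof is correct and takes essentially the same route as the paper: compare the left- and right-basis expansions of a single element and invoke uniqueness of coordinates to extract the two one-sided inverse identities, which you correctly note must be proved separately since $F$ is noncommutative. The only cosmetic difference is that the paper first computes the full two-sided product $(\delta y_i)\lambda = \delta(y_i\lambda)$ and then specializes one scalar to $1$, whereas you work directly with $\delta y_i$ (resp.\ $y_i\alpha$); this skips the auxiliary variable without changing the substance of the argument, and your index bookkeeping (in particular $(A^T)_{ji}=a_{ij}$ and the entrywise identification with $BA^T$ and $A^T B$) is accurate.
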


\begin{proof}
For all $\delta$ and $\lambda$ in $K$, we have
$$
(\delta y_{i})\lambda = \sum_j y_{j} b_{ji}(\delta)\lambda  =  \sum_j (\sum_k a_{jk}(b_{ji}(\delta) \lambda) y_{k}) = \sum_k (\sum_j a_{jk}(b_{ji}(\delta) \lambda)) y_{k}.
$$
On the other hand,
$$
\delta (y_{i} \lambda) = \delta (\sum_j a_{ij}(\lambda) y_{j}) =  \sum_k \delta a_{ik} (\lambda) y_{k}.
$$
Therefore, $\delta a_{ik}(\lambda)=\sum_j a_{jk}(b_{ji}(\delta)
\lambda)$.  In particular, when $\lambda = 1$, we have
$$
\delta \delta_{ik} = \sum_j a_{jk}(b_{ji}(\delta)),
$$
where $\delta_{ik}=0$ if $i \neq k$ and $\delta_{ik}=1$ if $i=k$.  The left-hand side is the $(k,i)$-entry of $\delta I_{n}$,
while the right-hand side is the $(k,i)$-entry of
$A^{T}B(\delta)$. Therefore, $A^{T}B=I_{n}$.

We now repeat the computation above, but group terms on the right
of the $y_{i}$'s.  On the one hand, we have
\begin{eqnarray*}
(\delta y_{i}) \lambda & = & \sum_j y_{j}b_{ji}(\delta) \lambda.
\end{eqnarray*}
On the other hand,
$$
\delta(y_{i}\lambda) = \sum_j \delta a_{ij}(\lambda) y_{j} = \sum_j(\sum_k y_{k} b_{kj} (\delta a_{ij}(\lambda))) = \sum_k y_{k}(\sum_j b_{kj}(\delta a_{ij}(\lambda))).
$$
Therefore, $\sum_j b_{kj}(\delta a_{ij}(\lambda)) =
b_{ki}(\delta)\lambda$.  In particular, if $\delta = 1$, we have
$$
\sum_j b_{kj}(a_{ij}(\lambda)) = \delta_{ki}\lambda.
$$
The left-hand side is the $(k,i)$-entry of $BA^{T}(\lambda)$ while
the right-hand side is the $(k,i)$-entry of $\lambda I_{n}$.
Therefore, $BA^{T}=I_{n}$.
\end{proof}

\begin{prop} \label{prop.adjoint}
Suppose $V$ has rank $n$.  Then there exist unit and counit morphisms making the pair of functors $(-\otimes_{K}{}^{*}V,-\otimes_{K}V)$
(resp. $(-\otimes_{K}V,-\otimes_{K} V^{*})$) from ${\sf Mod }K$ to ${\sf Mod }K$ adjoint.
\end{prop}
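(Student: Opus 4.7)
The plan is to exhibit unit and counit morphisms for each of the two adjunctions by explicit formulas built from the simultaneous basis $\{y_{1},\ldots,y_{n}\}$ of $V$ together with the dual bases $\{\psi_{i}\} \subset V^{*}$ and $\{\phi_{i}\} \subset {}^{*}V$ introduced before Lemma \ref{lemma.matrix}, and then to verify naturality, right $K$-linearity, and the triangle identities.

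For the pair $(-\otimes_{K} V, -\otimes_{K} V^{*})$, I take as unit and counit the natural transformations whose components at $M, N \in {\sf Mod }K$ are
$$
\eta_{M}: M \to M \otimes_{K} V \otimes_{K} V^{*}, \qquad m \mapsto \sum_{i} m \otimes y_{i} \otimes \psi_{i},
$$
$$
\epsilon_{N}: N \otimes_{K} V^{*} \otimes_{K} V \to N, \qquad n \otimes \psi \otimes v \mapsto n \cdot \psi(v).
$$
For the pair $(-\otimes_{K} {}^{*}V, -\otimes_{K} V)$, I use the analogous formulas with $\phi_{i}$ and ${}^{*}V$ in place of $\psi_{i}$ and $V^{*}$: the unit sends $m$ to $\sum_{i} m \otimes \phi_{i} \otimes y_{i}$, and the counit sends $n \otimes v \otimes \phi$ to $n \cdot \phi(v)$.

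The routine part is to check that the counits are balanced and right $K$-linear; this is immediate from the definitions of the $K \otimes_{k} K$-module structures on $V^{*}$ and ${}^{*}V$ combined with the right (resp.\ left) $K$-linearity of elements of $V^{*}$ (resp.\ ${}^{*}V$). The less obvious verification is right $K$-linearity of the units. For $\eta_{M}$ of the first pair, this reduces to the identity $\sum_{i}(k y_{i}) \otimes \psi_{i} = \sum_{j} y_{j} \otimes (\psi_{j} \cdot k)$ in $V \otimes_{K} V^{*}$. Writing $k y_{i} = \sum_{j} y_{j} b_{ji}(k)$ by the definition of $B$ and moving coefficients across the middle tensor via the balanced-tensor relation, this reduces to showing $\sum_{i} b_{ji}(k) \psi_{i} = \psi_{j} \cdot k$, which is exactly Lemma \ref{lemma.matrix}. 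The corresponding step for the second adjunction uses the other half of Lemma \ref{lemma.matrix}, together with the definition of $A$.

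With the structure maps in place, the triangle identities reduce to the classical dual-basis identities. For instance, one composite of Triangle 1 for the first pair sends $m \otimes v \in M \otimes_{K} V$ to $\sum_{i} m \otimes y_{i} \psi_{i}(v) = m \otimes v$, using the right-basis expansion $v = \sum_{i} y_{i} \psi_{i}(v)$. The other triangle sends $n \otimes \psi \in N \otimes_{K} V^{*}$ to $\sum_{i} n \psi(y_{i}) \otimes \psi_{i} = n \otimes \sum_{i} \psi(y_{i}) \psi_{i} = n \otimes \psi$, using that $\psi = \sum_{i} \psi(y_{i}) \psi_{i}$ for every $\psi \in V^{*}$. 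The analogous identities for the second adjunction follow from the left-basis expansion $v = \sum_{i} \phi_{i}(v) y_{i}$ combined with commutativity of $K$. The main obstacle is thus just the bookkeeping that translates right $K$-linearity of the units through Lemma \ref{lemma.matrix}; the remaining verifications are mechanical.
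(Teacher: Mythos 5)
Your proof is correct and uses the same ingredients as the paper's: build the unit and counit from a simultaneous basis and its dual bases, derive $K$-linearity from the matrix description of the actions, and reduce the triangle identities to the dual-basis expansions. Two presentational points distinguish it from the argument in the paper. First, the paper packages the structure maps once and for all as bimodule morphisms $\eta: K \to {}^{*}V \otimes_{K} V$ and $\epsilon: V \otimes_{K} {}^{*}V \to K$, checks the triangle identities there, and invokes the Eilenberg--Watts theorem to pass to the functor level, whereas you spell out the components $\eta_{M}$, $\epsilon_{N}$ directly; this is perfectly fine, since naturality is automatic for transformations obtained by tensoring a fixed bimodule map with $\operatorname{id}_{M}$. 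Second, and more substantively, the paper verifies that $\eta$ is a bimodule map by expanding the left action on $V$ through $B$ and then invoking Proposition \ref{prop.dualmatrix} (the identity $A^{T}B = BA^{T} = I_{n}$), while your verification uses only Lemma \ref{lemma.matrix} and the defining formulas for $A$ and $B$: after writing $ky_{i} = \sum_{j} y_{j} b_{ji}(k)$ and sliding the coefficient across the balanced tensor, the resulting identity $\sum_{i} b_{ji}(k)\psi_{i} = \psi_{j} \cdot k$ is exactly what Lemma \ref{lemma.matrix} states, and the analogous step for the pair $(-\otimes_{K}{}^{*}V, -\otimes_{K}V)$ uses the other half of Lemma \ref{lemma.matrix} together with $y_{j}k = \sum_{i} a_{ji}(k) y_{i}$. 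So you reach the same conclusion with one fewer intermediate result; it is a modest but genuine streamlining of the same approach.
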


\begin{proof}
We prove the first assertion.  The proof of the second is similar and we omit
it.  Throughout the proof, unlabeled tensor products are over $K$.  We begin by defining two maps $\eta:K \rightarrow {}^{*}V \otimes V$ and
$\epsilon:V \otimes {}^{*}V \rightarrow K$
by $\eta(a)= a \sum_i \phi_{i} \otimes y_{i}$ and
$\epsilon(\sum_{i,j}a_{i}y_{i} \otimes \phi_{j}b_{j}) = \sum_i
a_{i}b_{i}$.  We first show that $\eta$ is a $K
\otimes_{k}K$-module morphism.  To this end, we note that $\eta(ab)$ equals
$$
a(\sum_i b\phi_{i} \otimes y_{i})  = a(\sum_{i,j} \phi_{j}a_{ji}(b) \otimes y_{i}) =  a(\sum_{i,j}  \phi_{j} \otimes a_{ji}(b)y_{i}).
$$
The last expression equals
\begin{equation} \label{eqn.c}
a(\sum_{i,j,k} \phi_{j}\otimes y_{k}b_{ki}(a_{ji}(b))).
\end{equation}
By Proposition \ref{prop.dualmatrix}, $A^{T}=B^{-1}$, which implies that the expression (\ref{eqn.c}) equals $a(\sum_{i} \phi_{i}\otimes y_{i}b)=\eta(a)b$.  It follows that $\eta$ is a $K\otimes_{k}K$-module map.

A routine argument establishes the fact that $\epsilon$ is a $K \otimes_{k}K$-module
map.

Next, a routine computation shows that the compositions
$$
{}^{*}V \cong K \otimes {}^{*}V \overset{\eta \otimes
{}^{*}V}{\longrightarrow} {}^{*}V \otimes V \otimes {}^{*}V
\overset{{}^{*}V \otimes \epsilon}{\longrightarrow} {}^{*}V
\otimes K \cong {}^{*}V
$$
and
$$
V \cong V \otimes K \overset{V \otimes \eta}{\longrightarrow} V
\otimes {}^{*}V \otimes V \overset{\epsilon \otimes
V}{\longrightarrow} K \otimes V \cong V
$$
whose unlabeled maps are canonical, are both identity maps.  It
follows from this and from the Eilenberg-Watts Theorem that the natural transformation $\operatorname{id} \longrightarrow (-\otimes {}^{*}V) \otimes V$ defined by the composition
$$
M \longrightarrow M \otimes K \overset{M \otimes
\eta}{\longrightarrow} M \otimes ({}^{*}V \otimes V)
\longrightarrow (M \otimes {}^{*}V) \otimes V
$$
whose first arrow is canonical and whose last arrow is the
associativity isomorphism, and the natural transformation $(-\otimes V) \otimes {}^{*}V \longrightarrow \operatorname{id}$ defined by the composition
$$
(M \otimes V) \otimes {}^{*}V \longrightarrow M \otimes (V
\otimes {}^{*}V) \overset{M \otimes \epsilon}{\longrightarrow} M \otimes
K \longrightarrow M
$$
whose first arrow is the associativity isomorphism and whose last
arrow is canonical, define a unit and counit of the pair
$(-\otimes_{K}{}^{*}V,-\otimes_{K}V)$.
\end{proof}

We next prove that the image of $\eta$ defined in the proof of Proposition \ref{prop.adjoint} is independent of choice of simultaneous basis for $V$.  To this end, we will need the following lemma, whose proof we leave as an exercise.
\begin{lemma} \label{lemma.mn}
If $U$ has a right basis $\{u_{i}\}_{i=1}^{m}$ and $V$ has
a simultaneous basis $\{y_{j}\}_{j=1}^{n}$ then $U \otimes_{K}V$
has right basis $\{u_{i} \otimes y_{j}\}_{i,j}$.
\end{lemma}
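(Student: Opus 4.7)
The plan is to verify directly that $\{u_i \otimes y_j\}$ both right-spans $U \otimes_K V$ and is right $K$-linearly independent.

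For spanning, I would start with an elementary tensor $u \otimes v$: first expand $u = \sum_i u_i a_i$ using the right basis of $U$, then push the $a_i$ across the tensor to get $u \otimes v = \sum_i u_i \otimes (a_i v)$. Expanding each $a_i v$ in the right basis $\{y_j\}$ of $V$ as $a_i v = \sum_j y_j d_{ij}$, a second application of the tensor relation yields $u \otimes v = \sum_{i,j} (u_i \otimes y_j) d_{ij}$. Linearity extends this to arbitrary elements of $U \otimes_K V$.

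The heart of the argument is right linear independence. Assume $\sum_{i,j} (u_i \otimes y_j) c_{ij} = 0$. Using the matrix $A$ of additive functions from the discussion preceding Lemma \ref{lemma.matrix}, I would expand the right action as $y_j c_{ij} = \sum_k a_{jk}(c_{ij}) y_k$, push the resulting scalars across the tensor, and collect terms to rewrite the relation as
\[
\sum_k \Bigl( \sum_i u_i \cdot \sum_j a_{jk}(c_{ij}) \Bigr) \otimes y_k = 0.
\]
Since $\{y_k\}$ is a left $K$-basis of $V$, the canonical identification $U \otimes_K V \cong \bigoplus_k (U \otimes_K K y_k) \cong U^n$ of abelian groups (under which $u \otimes y_k$ corresponds to $u$ in the $k$-th slot) forces each of the $U$-coefficients above to vanish. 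Using that $\{u_i\}$ is a right $K$-basis of $U$ then forces $\sum_j a_{jk}(c_{ij}) = 0$ for every $i$ and $k$, which is the assertion that $A^T$ annihilates the column $(c_{i1}, \ldots, c_{in})^T$ for each $i$. Proposition \ref{prop.dualmatrix} supplies $BA^T = I_n$ in $M_n(F)$, so left-multiplication by $B$ collapses each such column to zero, yielding $c_{ij} = 0$ for all $i, j$.

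I expect the main obstacle to be this independence step, since the right $K$-action on $V$ is encoded by the matrix $A$ with entries in the ring $F$ of additive functions rather than by scalars in $K$, making the resulting system a priori $K$-nonlinear. The decisive ingredient that makes the argument go through is the nontrivial inversion identity $BA^T = I_n$ provided by Proposition \ref{prop.dualmatrix}, which converts the apparently twisted system into a tractable one.
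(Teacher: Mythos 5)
Your proof is correct. The spanning computation is valid, and the independence argument — pushing all scalars to the left across $V$ to get the system $\sum_j a_{jk}(c_{ij})=0$ for all $i,k$, and then inverting via $BA^T = I_n$ from Proposition~\ref{prop.dualmatrix} — is sound. The paper leaves this lemma as an exercise, so there is no proof of record to compare against, but your argument does establish the statement.

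That said, you have chosen the harder of the two natural decompositions of $U\otimes_K V$, and as a result your proof leans on Proposition~\ref{prop.dualmatrix} when it does not need to. You decompose over the left basis $\{y_k\}$ of $V$, giving $U\otimes_K V\cong U^n$ as abelian groups; since the right $K$-action on $U\otimes_K V$ acts through the $V$ factor, this identification is not right $K$-linear, which is precisely why you must first pass to left coordinates via $A$ and then undo that via $B$. If you instead decompose over the right basis $\{u_i\}$ of $U$, the right module structure is respected from the start: $U=\bigoplus_i u_iK$ as right $K$-modules gives a right $K$-linear isomorphism $U\otimes_K V\cong V^m$ sending $u_i\otimes v$ to $v$ in the $i$-th slot. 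Under this identification $\{u_i\otimes y_j\}$ is carried to the obvious right basis of $V^m$ coming from the right basis $\{y_j\}$ of $V$, and both spanning and independence follow at once, with no matrices $A$, $B$ and no appeal to Proposition~\ref{prop.dualmatrix}. This route also makes clear that the lemma only uses that $\{y_j\}$ is a right basis of $V$, not the full simultaneous-basis hypothesis.
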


\begin{prop} \label{prop.nochoice}
The image of $\eta$ in the proof of Proposition \ref{prop.adjoint} is independent of choice of simultaneous basis.
\end{prop}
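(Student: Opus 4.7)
The plan is to prove the stronger statement that the element $e := \sum_{i=1}^n \phi_i \otimes y_i \in {}^*V \otimes_K V$ is itself independent of the choice of simultaneous basis $\{y_i\}$; since $\eta(a) = a \cdot e$, this immediately yields the independence of $\operatorname{im} \eta$.

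Given a second simultaneous basis $\{y_i'\}$ with dual right basis $\{\phi_i'\} \subset {}^*V$, I would expand the change of left basis as $y_i' = \sum_j c_{ij} y_j$, where $C = (c_{ij}) \in \operatorname{GL}_n(K)$. Inverting and using left $K$-linearity of $\phi_i'$ one obtains the elementary formula $\phi_i'(y_j) = (C^{-1})_{ji}$. The $K$-balancing of the tensor product, $\phi \otimes k v = \phi \cdot k \otimes v$, then gives
$$
\sum_i \phi_i' \otimes y_i' \;=\; \sum_{i,j} \phi_i' \otimes c_{ij} y_j \;=\; \sum_j \Bigl(\sum_i \phi_i' \cdot c_{ij}\Bigr) \otimes y_j.
$$
Evaluating the inner sum on $y_k$ produces
$$
\Bigl(\sum_i \phi_i' \cdot c_{ij}\Bigr)(y_k) \;=\; \sum_i c_{ij}(C^{-1})_{ki} \;=\; (C^{-1}C)_{kj} \;=\; \delta_{kj} \;=\; \phi_j(y_k);
$$
since elements of ${}^*V$ are left $K$-linear maps determined by their values on the left basis $\{y_k\}$, we conclude $\sum_i \phi_i' \cdot c_{ij} = \phi_j$, and therefore $\sum_i \phi_i' \otimes y_i' = \sum_j \phi_j \otimes y_j$.

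The argument is mostly routine linear algebra, and I do not expect a serious obstacle; the only point that requires some care is to correctly invoke the right action of $K$ on ${}^*V$, namely $(\phi \cdot b)(x) = b\phi(x)$, when rewriting $\phi_i' \otimes c_{ij} y_j$ as $\phi_i' \cdot c_{ij} \otimes y_j$. Lemma \ref{lemma.mn} gives an alternative route to the same conclusion: one could instead expand both $\sum_i \phi_i \otimes y_i$ and $\sum_i \phi_i' \otimes y_i'$ in the right $K$-basis $\{\phi_i \otimes y_j\}$ of ${}^*V \otimes_K V$ and match coefficients.
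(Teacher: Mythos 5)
Your proof is correct, and it takes a route that is different in flavor from the paper's. You prove the stronger fact that the canonical element $\sum_i \phi_i \otimes y_i \in {}^*V \otimes_K V$ is itself basis-independent, by writing down the change-of-basis matrix $C$ relating the two simultaneous bases, computing $\phi_i'(y_j)=(C^{-1})_{ji}$, and pushing $C$ across the balanced tensor to collapse $\sum_i \phi_i' \otimes y_i'$ back to $\sum_j \phi_j \otimes y_j$. The paper also proves this stronger equality, but by a more intrinsic device: it uses Lemma \ref{lemma.mn} to show that every element of ${}^*V \otimes_K V$ can be written uniquely as $\sum_{i,j}\phi_i \otimes c_{ij}y_j$, then defines the injective evaluation map $\Psi:{}^*V\otimes V \to \operatorname{Hom}_{\sf Ab}(V,V)$, $\Psi(\sum_{i,j}\phi_i\otimes c_{ij}y_j)(v)=\sum_{i,j}\phi_i(v)c_{ij}y_j$, and observes that the canonical element from either basis maps to the identity function. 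In effect, the paper characterizes $\sum_i\phi_i\otimes y_i$ coordinate-free as the unique preimage of $\operatorname{id}_V$ under $\Psi$, while you verify equality directly by a matrix computation. Both arguments are elementary; yours avoids introducing $\Psi$ and checking its injectivity, at the cost of carrying the explicit transition matrix, while the paper's version packages the conclusion in a form that also explains \emph{why} the element is canonical. One small point worth making explicit in your write-up: the balancing identity $\phi\otimes kv = \phi\cdot k\otimes v$ in ${}^*V\otimes_K V$ holds because the tensor is taken over the right $K$-action on ${}^*V$ and the left $K$-action on $V$, which is exactly how $-\otimes_K{}^*V$ and $-\otimes_K V$ are composed in Proposition \ref{prop.adjoint}; you note this, and it is indeed the one place where the sign conventions of Definition \ref{defn.rightdual} matter.
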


\begin{proof}
Suppose $\{y_{i}\}$ and $\{z_{i}\}$ are simultaneous bases of $V$ with corresponding left dual bases $\{\phi_{i}\}$ and $\{\gamma_{i}\}$.  By Lemma \ref{lemma.mn},
every element of ${}^{*}V \otimes V$ can be written uniquely in the form $\sum_{i,j}\phi_{i} \otimes c_{ij}y_{j}$ where $c_{ij} \in K$.
Thus, we may define a function of abelian groups $\Psi:{}^{*}V \otimes V \rightarrow \operatorname{Hom }_{\sf Ab}(V,V)$ by the formula
 $\Psi(\sum_{i,j}\phi_{i} \otimes c_{ij}y_{j})(v)=\sum_{i,j}\phi_{i}(v)c_{ij}y_{j}$.  It is easy to check that $\Psi$ is an injective group homomorphism.
  We claim that $\sum_{i}\phi_{i} \otimes y_{i}=\sum_{i}\gamma_{i} \otimes z_{i}$ by showing that $\Psi(\sum_{i}\phi_{i} \otimes y_{i})=\Psi(\sum_{i}\gamma_{i} \otimes z_{i})$.
  The proposition will follow from the claim.
To prove the claim, we note that if $a \in K$ then $\Psi(\sum_{i}\phi_{i} \otimes y_{i})(ay_{l})=ay_{l}$ for each $l$.
Since $\Psi(\sum_{i}\phi_{i} \otimes y_{i})$ is additive, it follows that $\Psi(\sum_{i}\phi_{i} \otimes y_{i})$ is the identity function.
In a similar manner, one shows that $\Psi(\sum_{i}\gamma_{i} \otimes z_{i})$ is the identity function.  The claim follows.
\end{proof}

\subsection{The left and right dual of a simple two-sided vector space}
We now works towards establishing formulas for the left and right dual of a simple two-sided vector space
(Theorem \ref{thm.dualformula}).  To this end, we will use the following notation and conventions for the remainder of the section.  We will routinely utilize the notation defined in (\ref{eqn.simpledata}) and (\ref{eqn.phi}) at the beginning of Section \ref{sect.babysimple}.  We assume $K$ is perfect and $\lambda \in \operatorname{Emb}(K)$ with $|\lambda^{G}|=n$, where, we remind the reader,  $G:=\operatorname{Aut}(\overline{K}/K)$.  In addition, we let $\overline{\lambda}$ be an extension of $\lambda$ to $\overline{K}$, we let $\overline{\mu}$ be the inverse of $\overline{\lambda}$ and we let $\mu=\overline{\mu}|_{K}$.  Finally, if $\gamma, \delta:K \rightarrow L$ are $k$-linear
embeddings of $K$ into a field $L$, we let ${}_{\gamma}L_{\delta}$ denote the two-sided vector space whose underlying set is $L$ and whose $K \otimes_{k}K$-module action is induced by the formula $(a \otimes b) \cdot c := \gamma(a)\delta(b) c$.

\begin{lemma} \label{lemma.newform}
The map $\overline{\lambda}$ induces an isomorphism ${}_{\mu}\mu(K) \vee K_{\operatorname{id}} \rightarrow V(\lambda)$.
\end{lemma}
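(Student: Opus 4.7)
The plan is to check directly that $\overline{\lambda}$, restricted to the composite field $\mu(K)\vee K \subset \overline{K}$, is simultaneously (i) a bijection onto the underlying set $K(\lambda) = K\vee \lambda(K)$ of $V(\lambda)$, and (ii) $K\otimes_k K$-linear when the two module structures are those in the statement. Both parts are short; the only subtlety is bookkeeping with the definitions of ${}_\gamma L_\delta$ and $V(\lambda)$.

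First, I would unpack the two module structures. On ${}_{\mu}(\mu(K)\vee K)_{\operatorname{id}}$, the formula $(a\otimes b)\cdot c = \mu(a)bc$ (multiplication in the field $\mu(K)\vee K$) makes the left $K$-action be multiplication by $\mu(a)$ and the right $K$-action be multiplication by $b$. On $V(\lambda)$, the left action is ordinary multiplication by $a$ and the right action is multiplication by $\lambda(b)$. Since $\overline{\lambda}$ is an automorphism of $\overline K$ extending $\lambda$, and $\overline{\mu}=\overline{\lambda}^{-1}$ extends $\mu$, we have $\overline{\lambda}(\mu(K)) = \overline{\lambda}(\overline{\mu}(K)) = K$ and $\overline{\lambda}(K) = \lambda(K)$. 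Hence $\overline{\lambda}$ restricts to a bijection of fields $\mu(K)\vee K \xrightarrow{\sim} K\vee \lambda(K)=K(\lambda)$.

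Second, I would verify $K\otimes_k K$-linearity by a one-line computation: for $a,b\in K$ and $c\in \mu(K)\vee K$,
\[
\overline{\lambda}\bigl((a\otimes b)\cdot c\bigr)
= \overline{\lambda}\bigl(\mu(a)\,b\,c\bigr)
= \overline{\lambda}(\mu(a))\,\overline{\lambda}(b)\,\overline{\lambda}(c)
= a\,\lambda(b)\,\overline{\lambda}(c)
= a\cdot \overline{\lambda}(c)\cdot \lambda(b),
\]
where in the last step we use commutativity of $K(\lambda)$, and where the final expression is precisely $(a\otimes b)\cdot \overline{\lambda}(c)$ in $V(\lambda)$. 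Additivity of $\overline{\lambda}$ is automatic since it is a field homomorphism.

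There is no real obstacle here; the argument is essentially a translation of the definition of $V(\lambda)$ through the field automorphism $\overline{\lambda}$. The only point requiring care is remembering that the right action of $b$ on $V(\lambda)$ is via $\lambda(b)$, and that this matches $\overline{\lambda}$ applied to the right action $c\mapsto cb$ on the source, which is exactly the identity $\overline{\lambda}(b)=\lambda(b)$ for $b\in K$. Combining bijectivity with the displayed equality proves that $\overline{\lambda}$ descends to the asserted isomorphism of two-sided vector spaces.
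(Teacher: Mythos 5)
Your proof is correct and follows essentially the same route as the paper's: show that $\overline{\lambda}$, being an automorphism of $\overline K$ with $\overline{\lambda}(\mu(K))=K$ and $\overline{\lambda}(K)=\lambda(K)$, carries $\mu(K)\vee K$ bijectively onto $K(\lambda)$, and then observe that this bijection respects the two $K\otimes_k K$-structures. The only difference is that you spell out the one-line bimodule-linearity computation $\overline{\lambda}(\mu(a)bc)=a\,\lambda(b)\,\overline{\lambda}(c)$, which the paper leaves implicit behind the phrase ``it follows that this restriction induces a map of two-sided vector spaces''; that is a helpful bit of explicitness but not a different argument.
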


\begin{proof}
The restriction of $\overline{\lambda}$ to $\mu(K) \vee K$ maps into $K(\lambda)$.
It follows that this restriction induces a map of two-sided vector spaces
${}_{\mu}\mu(K) \vee K_{\operatorname{id}} \rightarrow V(\lambda)$.  Since the kernel of $\overline{\lambda}$ is $0$,
and since the image of $\overline{\lambda}$ restricted to $\mu(K) \vee K$ is a subfield of $\overline{K}$ containing
$K$ and $\lambda(K)$, $\overline{\lambda}$ induces an isomorphism of two-sided vector spaces ${}_{\mu}\mu(K) \vee K_{\operatorname{id}} \rightarrow V(\lambda)$,
as desired.
\end{proof}
We next introduce notation which will be employed for the rest of this section.  Since $|\lambda^{G}|=n$, $\lambda^{G}=\{\sigma_{1}\lambda,\ldots,\sigma_{n}\lambda\}$ where $\sigma_{i} \in G$.  Without loss of generality, we assume $\sigma_{1}$ is the identity map.
\begin{lemma} \label{lemma.ext}
The $n \times n$ matrix whose $ij$ entry is $\sigma_{i}(\alpha_{j})$ is invertible.  Therefore, the function $\lambda_{i}:K \rightarrow K$ defined in (\ref{eqn.simpledata}) is a $\overline{K}$-linear combination of
elements of $\lambda^{G}$.
\end{lemma}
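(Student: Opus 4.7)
The plan is to show invertibility of the matrix $M = (\sigma_i(\alpha_j))$ by combining a count of $K$-embeddings of $K(\lambda)$ with Dedekind's linear independence of characters, and then to derive the second statement by applying the $\sigma_j$ to the defining equation of the $\lambda_i$ and solving the resulting linear system.

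First, I would observe that $[K(\lambda):K] = n$. Indeed, by Proposition \ref{prop.vlambda} the left dimension of $V(\lambda)$ over $K$ is $[K(\lambda):K]$, and by Theorem \ref{thm.papp} this left dimension equals $|\lambda^G|=n$. Since $K$ is perfect and $K(\lambda) \subset \overline{K}$ is a finite extension of $K$, the extension $K(\lambda)/K$ is separable of degree $n$, so there are exactly $n$ distinct $K$-embeddings $K(\lambda) \hookrightarrow \overline{K}$. The restrictions $\sigma_1|_{K(\lambda)}, \ldots, \sigma_n|_{K(\lambda)}$ provide $n$ such embeddings, and they are pairwise distinct because an equality $\sigma_i|_{K(\lambda)} = \sigma_j|_{K(\lambda)}$ would force $\sigma_i\lambda = \sigma_j\lambda$, contrary to the assumption that $\lambda^G = \{\sigma_1\lambda, \ldots, \sigma_n\lambda\}$ has cardinality $n$. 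Thus these restrictions exhaust all $K$-embeddings of $K(\lambda)$ into $\overline{K}$.

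To prove invertibility of $M$, I would suppose, for contradiction, that there exist $c_1, \ldots, c_n \in \overline{K}$, not all zero, with $\sum_i c_i \sigma_i(\alpha_j)=0$ for $j=1,\ldots,n$. Since $\{\alpha_1,\ldots,\alpha_n\}$ is a $K$-basis of $K(\lambda)$ and each $\sigma_i$ is $K$-linear, this forces $\sum_i c_i \sigma_i(y)=0$ for every $y \in K(\lambda)$. This contradicts Dedekind's theorem on linear independence of distinct field homomorphisms $K(\lambda) \to \overline{K}$, so $M$ is invertible. For the second assertion, I would apply each $\sigma_j$ to the equation $\lambda(x)=\sum_i \lambda_i(x)\alpha_i$; because $\lambda_i(x) \in K$ is fixed by $\sigma_j$, this yields
$$
(\sigma_j\lambda)(x) \;=\; \sum_{i=1}^{n} \sigma_j(\alpha_i)\, \lambda_i(x) \qquad (x \in K),
$$
i.e.\ the column vector of functions $(\sigma_j\lambda)_j$ equals $M$ times the column vector $(\lambda_i)_i$. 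Multiplying by $M^{-1}$ expresses each $\lambda_i$ as a $\overline{K}$-linear combination of the elements of $\lambda^G$, as required. I do not anticipate a serious obstacle; the main point is recognizing that Dedekind independence delivers the invertibility of $M$ once the correct count of embeddings is in place.
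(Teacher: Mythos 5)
Your proof is correct and follows essentially the same approach as the paper: both establish invertibility of $M$ via Dedekind's linear independence of distinct characters (using the relation $\sigma_i\lambda = \sum_j \sigma_i(\alpha_j)\lambda_j$), and both derive the second assertion by inverting the resulting linear system. Your additional observations --- counting the $K$-embeddings of $K(\lambda)$ and verifying that the restrictions $\sigma_i|_{K(\lambda)}$ are pairwise distinct --- make the appeal to character independence fully explicit, but are otherwise not needed beyond the distinctness point, which the paper takes as implicit.
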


\begin{proof}
We have $\sigma_{i} \lambda = \sum_{j=1}^{n} \sigma_{i}(\alpha_{j})\lambda_{j}$ since
$\sigma_{i} \in \operatorname{Aut }(\overline{K}/K)$ and $\lambda_{j}:K \rightarrow K$.
The second assertion will thus follow from the first.  To prove the first assertion, we show that the columns of the matrix whose $ij$ entry is $\sigma_{j}(\alpha_{i})$ are linearly independent over $\overline{K}$.  Otherwise, without loss of generality,
\begin{equation} \label{eqn.columns}
\begin{pmatrix} \sigma_{n}(\alpha_{1}) \\ \vdots \\ \sigma_{n}(\alpha_{n}) \end{pmatrix} = a_{1} \begin{pmatrix} \sigma_{1}(\alpha_{1}) \\ \vdots \\ \sigma_{1}(\alpha_{n}) \end{pmatrix} + \cdots + a_{n-1}\begin{pmatrix} \sigma_{n-1}(\alpha_{1}) \\ \vdots \\ \sigma_{n-1}(\alpha_{n}) \end{pmatrix}
\end{equation}
where $a_{i} \in \overline{K}$.  Since $\sigma_{i}$ is $K$-linear, it induces a $K$-linear embedding of $K(\lambda)$ into $\overline{K}$.  Thus, (\ref{eqn.columns}) implies that $\sigma_{n}|_{K(\lambda)}=a_{1}\sigma_{1}|_{K(\lambda)}+ \cdots+a_{n-1}\sigma_{n-1}|_{K(\lambda)}$
which contradicts independence of distinct characters.  The lemma follows.
\end{proof}
We now introduce notation we will require in the statement of Lemma \ref{lemma.mult} and in the proof of Theorem \ref{thm.dualformula}.  By Lemma \ref{lemma.ext}, there is an inverse, $(a_{ij})$ to $(\sigma_{i}(\alpha_{j}))$.  Since, as we observed in the first line of the proof of Lemma \ref{lemma.ext}, we have $\sigma_{i} \lambda = \sum_{j=1}^{n} \sigma_{i}(\alpha_{j})\lambda_{j}$, it follows that $\lambda_{i}=\sum_{j}a_{ij}\sigma_{j}\lambda$.  We define $\overline{\lambda}_{i} =\sum_{j}a_{ij}\sigma_{j}\overline{\lambda}$.

\begin{lemma} \label{lemma.mult}
We have
$$
\overline{\lambda} = \sum_{i}{\overline{\lambda}}_{i}\alpha_{i}
$$
and, for $b, c \in \overline{K}$ such that $\overline{\lambda}_{i}(b), \overline{\lambda}_{i}(c)$, and $\overline{\lambda}_{i}(bc)$ are in $K$,
$$
{\overline{\lambda}}_{k}(bc)=\sum_{i,j}\overline{\lambda}_{i}(b)\overline{\lambda}_{j}(c)\beta_{ijk}.
$$
\end{lemma}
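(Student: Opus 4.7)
The plan is to prove the first identity by direct computation using only the definition $\overline{\lambda}_i = \sum_j a_{ij}\sigma_j\overline{\lambda}$ together with the matrix-inverse relation. Substituting and swapping the order of summation gives
$$\sum_i \overline{\lambda}_i \alpha_i \;=\; \sum_j \sigma_j\overline{\lambda}\cdot\Bigl(\sum_i a_{ij}\alpha_i\Bigr).$$
The inner sum is the $(1,j)$-entry of $M M^{-1}$, where $M = (\sigma_i(\alpha_j))$, because the first row of $M$ is $(\alpha_1,\ldots,\alpha_n)$ thanks to the convention that $\sigma_1$ is the identity. Since $MM^{-1}=I_n$, this entry equals $\delta_{1j}$, so the only surviving term in the outer sum is $\sigma_1\overline{\lambda} = \overline{\lambda}$, as desired.

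For the second identity, I would exploit the fact that $\overline{\lambda}$ is a field homomorphism, so $\overline{\lambda}(bc) = \overline{\lambda}(b)\overline{\lambda}(c)$. Applying the first part of the lemma to both sides, the left-hand side becomes $\sum_k \overline{\lambda}_k(bc)\alpha_k$, while the right-hand side expands (using commutativity of $\overline{K}$ and the structure constants $\alpha_i\alpha_j = \sum_k \beta_{ijk}\alpha_k$) as
$$\Bigl(\sum_i \overline{\lambda}_i(b)\alpha_i\Bigr)\Bigl(\sum_j \overline{\lambda}_j(c)\alpha_j\Bigr)
\;=\; \sum_{k}\Bigl(\sum_{i,j}\overline{\lambda}_i(b)\overline{\lambda}_j(c)\beta_{ijk}\Bigr)\alpha_k.$$
By hypothesis $\overline{\lambda}_i(b)$, $\overline{\lambda}_j(c)$, and $\overline{\lambda}_k(bc)$ all lie in $K$, and $\beta_{ijk}\in K$ as well, so both expressions are $K$-linear combinations of $\{\alpha_1,\ldots,\alpha_n\}$. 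Since this set is a $K$-basis of $K(\lambda)$, equating coefficients of $\alpha_k$ yields the claimed formula.

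The substantive content is essentially the matrix identity $\sum_i a_{ij}\alpha_i = \delta_{1j}$ used in the first part; aside from that, the proof is a direct verification. The main thing to watch is the index convention for the inverse matrix $(a_{ij})$ of $M$, since swapping rows and columns there would derail the computation.
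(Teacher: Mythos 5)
Your proposal is correct and follows essentially the same route as the paper: both parts rest on the inverse relation between $(a_{ij})$ and $(\sigma_i(\alpha_j))$ for the first identity (the paper derives $\sigma_i\overline{\lambda}=\sum_j\sigma_i(\alpha_j)\overline{\lambda}_j$ and sets $i=1$, which is the same matrix computation you perform directly), and both prove the second identity by applying the first to $\overline{\lambda}(bc)=\overline{\lambda}(b)\overline{\lambda}(c)$, expanding via the structure constants $\beta_{ijk}$, and matching coefficients of $\alpha_k$. Your explicit remark that the hypothesis places everything in $K$ so that coefficients in the basis $\{\alpha_i\}$ of $K(\lambda)/K$ can be compared is a welcome point of care that the paper leaves implicit.
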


\begin{proof}
It follows from the fact that $(a_{ij})$ and $(\sigma_{i}(\alpha_{j}))$ are inverse that $\sigma_{i}\overline{\lambda}=\sum_j \sigma_{i}(\alpha_{j}){\overline{\lambda}}_{j}$.  When $i=1$, we get the first assertion.

To prove the second assertion, we note that since $\overline{\lambda}(bc)=\overline{\lambda}(b)\overline{\lambda}(c)$, we have
$$
\sum_{i}\overline{\lambda}_{i}(bc)\alpha_{i} =
(\sum_{i}\overline{\lambda}_{i}(b)\alpha_{i})(\sum_{j} \overline{\lambda}_{j}(c)\alpha_{j}).
$$
It follows that $\overline{\lambda}_{k}(bc)$ is equal to the coefficient of $\alpha_{k}$ in
$\sum_{i,j}\overline{\lambda}_{i}(b)\overline{\lambda}_{j}(c)\alpha_{i}\alpha_{j} =
\sum_{i,j,k}\overline{\lambda}_{i}(b)\overline{\lambda}_{j}(c)\beta_{ijk}\alpha_{k}$.  The result follows.
\end{proof}

\begin{thm} \label{thm.dualformula}
Suppose $[K(\lambda):\lambda(K)]$ is finite.  Then ${}^{*}V(\lambda) \cong V(\lambda)^{*} \cong V(\mu)$, and $V(\lambda)$ has rank $n$ if and only if $V(\mu)$ has rank $n$ if and
only if $[K(\lambda):\lambda(K)]=n$.
\end{thm}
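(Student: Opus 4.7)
The strategy is to realize $V(\lambda)$ and $V(\mu)$ as concrete two-sided structures on the common field $K(\lambda)$ and then compute the duals via a trace pairing. By definition, $V(\lambda)$ is $K(\lambda)$ with action $a\cdot x\cdot b = a x\lambda(b)$. For $V(\mu)$, observe that $\overline{\lambda}$ restricts to a field isomorphism $K(\mu)=K\cdot\mu(K)\to \lambda(K)\cdot K=K(\lambda)$; pushing the defining action $a\cdot v\cdot b = a v \mu(b)$ forward along $\overline{\lambda}$ yields the model $V(\mu)\cong (K(\lambda),\ a\cdot y\cdot b = \lambda(a)\,y\,b)$. Thus the first assertion of the theorem reduces to showing that both $V(\lambda)^{*}$ and ${}^{*}V(\lambda)$ realize this same model.

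To produce the right-dual isomorphism, the plan is to use a trace form. Since $K$ is perfect and $[K(\lambda):\lambda(K)]$ is finite by hypothesis, $K(\lambda)/\lambda(K)$ is a finite separable extension, so the pairing
\[
K(\lambda)\times K(\lambda)\longrightarrow K,\qquad (x,y)\longmapsto \lambda^{-1}\bigl(\operatorname{Tr}_{K(\lambda)/\lambda(K)}(xy)\bigr),
\]
with $K(\lambda)$ viewed on the right via $\lambda$, is non-degenerate. Sending $y$ to $\psi_{y}(x):=\lambda^{-1}\bigl(\operatorname{Tr}_{K(\lambda)/\lambda(K)}(xy)\bigr)$ therefore gives an additive bijection $K(\lambda)\to V(\lambda)^{*}$. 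Using the identity $\operatorname{Tr}(\lambda(c)z)=\lambda(c)\operatorname{Tr}(z)$, one checks directly that each $\psi_{y}$ is right $K$-linear and that $a\cdot\psi_{y}\cdot b = \psi_{\lambda(a)by}$, giving exactly the model above. A parallel argument with the trace pairing $(x,y)\mapsto \operatorname{Tr}_{K(\lambda)/K}(xy)$ (using separability of the finite extension $K(\lambda)/K$) identifies ${}^{*}V(\lambda)$ with the same model, producing ${}^{*}V(\lambda)\cong V(\lambda)^{*}\cong V(\mu)$.

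For the rank statement, Proposition \ref{prop.vlambda} gives $\dim_{K}{}_{K}V(\lambda) = n$ and $\dim_{K} V(\lambda)_{K}=[K(\lambda):\lambda(K)]$. The field isomorphism $\overline{\lambda}\colon K(\mu)\to K(\lambda)$ carries $K$ to $\lambda(K)$ and $\mu(K)$ to $K$, hence $[K(\mu):K]=[K(\lambda):\lambda(K)]$ and $[K(\mu):\mu(K)]=[K(\lambda):K]=n$; finiteness of $[K(\mu):K]$ together with perfectness of $K$ guarantees (by the standard Galois-theoretic count) that $|\mu^{G}|$ is finite, so Proposition \ref{prop.vlambda} applies to $\mu$ and yields left and right dimensions $[K(\lambda):\lambda(K)]$ and $n$ for $V(\mu)$. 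Consequently all three conditions in the theorem are equivalent to $[K(\lambda):\lambda(K)]=n$. The main obstacle is not computational but bookkeeping: one must keep straight the three $K$-module structures on $K(\lambda)$ (via $\operatorname{id}$, via $\lambda$, and via $\overline{\lambda}$) and the two distinct trace maps used for the left and right duals; once this is set up, the verifications reduce to multiplicativity of the trace and commutativity of $K(\lambda)$.
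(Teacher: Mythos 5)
Your proposal is correct, and it takes a genuinely different route from the paper on the key point, the isomorphisms ${}^{*}V(\lambda)\cong V(\lambda)^{*}\cong V(\mu)$. The paper builds an explicit left basis $\{\overline{\mu}_{1},\dots,\overline{\mu}_{m}\}$ of $V(\lambda)^{*}$ out of the inverse of the matrix $(\sigma_{i}(\alpha_{j}))$ (Lemma \ref{lemma.ext}, whose proof rests on linear independence of characters), verifies via Lemma \ref{lemma.mult} that the right-action matrix in this basis is exactly the $\phi$-matrix of \eqref{eqn.phi} for $\mu$, and handles ${}^{*}V(\lambda)$ by first rewriting $V(\lambda)$ as ${}_{\mu}\mu(K)\vee K_{\operatorname{id}}$ via Lemma \ref{lemma.newform}. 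You instead transport $V(\mu)$ along $\overline{\lambda}$ to the model $(K(\lambda),\; a\cdot y\cdot b=\lambda(a)yb)$, and then produce the dualities directly by the non-degenerate pairings $(x,y)\mapsto \lambda^{-1}\operatorname{Tr}_{K(\lambda)/\lambda(K)}(xy)$ and $(x,y)\mapsto\operatorname{Tr}_{K(\lambda)/K}(xy)$; the module-compatibility checks you sketch do go through (I verified $a\cdot\psi_{y}\cdot b=\psi_{\lambda(a)yb}$ and $a\cdot\phi_{y}\cdot b=\phi_{\lambda(a)yb}$), and both extensions are separable because $K$, hence $\lambda(K)$, is perfect. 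This bypasses Lemmas \ref{lemma.ext} and \ref{lemma.mult} and the explicit structure constants $\beta_{ijk}$, at the cost of invoking the trace-form duality rather than independence of characters --- roughly the same amount of field theory, packaged more conceptually. Your treatment of the rank equivalence (transporting degrees along $\overline{\lambda}$, noting $|\mu^{G}|$ is finite because $\overline{K}/K$ is Galois and $[K(\mu):K]<\infty$) matches the paper's. Two minor points: the phrase ``multiplicativity of the trace'' should be ``$\lambda(K)$-linearity (resp.\ $K$-linearity) of the trace,'' and it would be worth a sentence confirming that $y\mapsto\psi_{y}$ actually lands in $\operatorname{Hom}_{K}(V(\lambda)_{K},K)$ (right $K$-linearity), which you assert but do not display.
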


\begin{proof}
We begin by noting that since $[K(\lambda):\lambda(K)]=m < \infty$, we must have $[K(\mu):K]=m$, so that the notation $V(\mu)$ makes sense and the module $V(\mu)$ is simple.  We now prove that $V(\lambda)^{*} \cong V(\mu)$.  In order to proceed, we need to make a remark regarding notation.  We adopt the notation defined in (\ref{eqn.simpledata}) and (\ref{eqn.phi}) at the beginning of Section \ref{sect.babysimple}, using $\mu$ in
place of $\lambda$.  Furthermore, we define $\overline{\mu}_{i}$ in the same way we defined $\overline{\lambda}_{i}$ preceding the statement of Lemma \ref{lemma.mult}.  We will show that
$\{\overline{\mu}_{1},\ldots, \overline{\mu}_{m}\}$ is a left basis for $V(\lambda)^{*}$, and in this basis the
matrix for the right action of $K$ is the matrix $\phi$ defined by (\ref{eqn.phi}).  It will follow that $V(\lambda)^{*} \cong V(\mu)$.

We begin by showing that $\overline{\mu}_{i}$ restricts to an element of $\operatorname{Hom}_{K}(V(\lambda)_{K},K)$.
By the definition of $(a_{ij})$ and $\overline{\mu}_{i}$ following the proof of Lemma \ref{lemma.ext}, if $c_{l},d_{l} \in K$ where $l$ runs over a finite index set, then
$\overline{\mu}_{i}(\sum_{l}c_{l}\lambda(d_{l})) \in K$.  Since $[K(\lambda):K]$ is finite, it follows that
$\overline{\mu}_{i}$ restricted to $V(\lambda)$ takes values in $K$.  If $\alpha \in K(\lambda)$ and $b \in K$, then
\begin{eqnarray*}
\overline{\mu}_{i}(\alpha \cdot b) & = & \overline{\mu}_{i}(\lambda(b) \alpha) \\
& = & \sum_{j}a_{j}\sigma_{j}\overline{\mu}(\lambda(b)) \sigma_{j}\overline{\mu}(\alpha) \\
& = & b \overline{\mu}_{i}(\alpha) \\
& = & \overline{\mu}_{i}(\alpha) \cdot b.
\end{eqnarray*}
It follows that $\overline{\mu}_{i} \in \operatorname{Hom}_{K}(V(\lambda)_{K},K)$ as desired.

We next show that $\{\overline{\mu}_{1},\ldots, \overline{\mu}_{m}\}$ is a left basis for $V(\lambda)^{*}$.  Since $V(\lambda)$ has right dimension $m$ by Proposition \ref{prop.vlambda}, $V(\lambda)^{*}$ has left dimension $m$.  Therefore, it suffices to show that $\{\overline{\mu}_{1},\ldots, \overline{\mu}_{m}\}$ is left linearly independent.  To this end, since $\{\sigma_{1}\overline{\mu},\ldots,\sigma_{m} \overline{\mu}\}$ is a set of distinct embeddings, this set is left linearly independent over $\overline{K}$.  Since the matrix  $(a_{ij})$ defined in the paragraph following the proof of Lemma \ref{lemma.ext} and the matrix $(\sigma_{i}(\alpha_{j}))$ are inverse, $\sigma_{i} \overline{\mu} = \sum_{j=1}^{m} \sigma_{i}(\alpha_{j})\overline{\mu}_{j}$.  It follows that $\{\overline{\mu}_{1},\ldots, \overline{\mu}_{m}\}$ is left linearly independent over $\overline{K}$, hence over $K$.  Therefore, $\{\overline{\mu}_{1},\ldots, \overline{\mu}_{m}\}$ is
a left basis for $V(\lambda)^{*}$.

We complete the proof that $V(\lambda)^{*} \cong V(\mu)$ by computing the matrix for the right action of $K$ on $V(\lambda)^{*}$ in the left basis $\{\overline{\mu}_{1},\ldots, \overline{\mu}_{m}\}$.  If $b \in K$, then the argument in the second paragraph of the proof implies that $\overline{\mu}_{k}(b), \overline{\mu}_{k}(\alpha_{l})$, and $\overline{\mu}_{k}(b\alpha_{l})$ are elements of $K$.  Therefore, Lemma \ref{lemma.mult} implies that $\overline{\mu}_{k}(b\alpha_{l})$ equals $\sum_{i,j}\overline{\mu}_{i}(b)
\overline{\mu}_{j}(\alpha_{l})\beta_{ijk}$.  This, in turn equals
$$
\sum_{j}[(\sum_{i}\overline{\mu}_{i}(b)\beta_{ijk})\overline{\mu}_{j}(\alpha_{l})]=
\sum_{j}[(\sum_{i}\overline{\mu}_{i}(b)\beta_{jik})\overline{\mu}_{j}(\alpha_{l})] .
$$
Thus, the $kj$ entry of the matrix for the right action is $\sum_{i}\beta_{jik}\mu_{i}(b)$ since $b \in K$.  It follows that the matrix for the right action is the matrix $\phi$ (defined in (\ref{eqn.phi})) corresponding to $\mu$.

To complete the proof of the first part of the theorem, we need to prove that ${}^{*}V(\lambda) \cong V(\mu)$.  Since
the proof is similar to the proof that $V(\lambda)^{*} \cong V(\mu)$, we only sketch it.  By
Lemma \ref{lemma.newform}, if $W = {}_{\mu}\mu(K) \vee K_{\operatorname{id}}$, then
${}^{*}V(\lambda) \cong {}^{*}W$.  It is straightforward to check, as above, that
 $\{\overline{\lambda}_{1},\ldots, \overline{\lambda}_{n}\}$ is a right basis for ${}^{*}W$, which allows
 one to prove that ${}^{*}W \cong {}_{\lambda}\lambda(K) \vee K_{\operatorname{id}}$.  Thus, by Lemma
 \ref{lemma.newform}, ${}^{*}W \cong V(\mu)$.

Finally, we prove the second part of the theorem.  By Theorem \ref{thm.papp} and Proposition \ref{prop.vlambda}, $n=[K(\lambda):K]$.
Furthermore, by Lemma \ref{lemma.newform}, $[K(\mu):K]=[K(\lambda):\lambda(K)]$.  Similarly, $[K(\mu):\mu(K)]=[K(\lambda):K]$.
It follows from Proposition \ref{prop.vlambda} that $V(\lambda)$ has
rank $n$ if and only if $V(\mu)$ has rank $n$ if and only if $[K(\lambda):\lambda(K)]=n$.
\end{proof}

\section{Non-commutative symmetric algebras} \label{sec.ncsym}
In this section we recall (from \cite{p1bundle}) the definition of the non-commutative symmetric algebra of a rank $n$ two-sided vector space $V$.  We conclude the paper by using Theorem \ref{thm.dualformula} to show that the non-commutative symmetric algebra of $V$ exists over a perfect field.

\begin{defn} \label{defn.ncsymalg}
Let $V$ be a rank $n$ two-sided vector space over $K$ such that $V^{i*}$ has rank $n$ for all $i$.  Since $(-\otimes_{K}V^{i*},-\otimes_{K}V^{(i+1)*})$
is an adjoint pair for each $i$ by Proposition \ref{prop.adjoint} and Corollary \ref{cor.matrix0}, the Eilenberg-Watts Theorem implies that the unit of the pair $(-\otimes_{K}V^{i*},-\otimes_{K}V^{(i+1)*})$ induces a map of two-sided vector spaces
$K \rightarrow V^{i*} \otimes_{K} V^{(i+1)*}$.  We denote the image of this map by $Q_{i}$.

The {\it non-commutative symmetric algebra generated by $V$},
denoted $\mathcal{S}_{K}^{n.c.}(V)$, is the $\mathbb{Z}$-algebra (see \cite[p. 95]{pol} for a definition of $\mathbb{Z}$-algebra) $\underset{i,j \in
\mathbb{Z}}{\oplus}A_{ij}$ with components defined as follows:
\begin{itemize}
\item{} $A_{ij}=0$ if $i>j$.

\item{} $A_{ii}=K$.

\item{} $A_{i,i+1}=V^{i*}$.
\end{itemize}
In order to define $A_{ij}$ for $j>i+1$, we introduce some notation: we define $B_{i,i+1}=A_{i,i+1}$, and, for $j>i+1$, we define
$$
B_{ij}=A_{i,i+1} \otimes A_{i+1,i+2} \otimes \cdots \otimes A_{j-1,j}.
$$
We let $R_{i,i+1}=0$, $R_{i,i+2}=Q_{i}$,
$$
R_{i,i+3}=Q_{i} \otimes V^{(i+2)*}+V^{i*} \otimes Q_{i+1},
$$
and, for $j>i+3$, we let
$$
R_{ij}=Q_{i} \otimes B_{i+2,j}+B_{i,i+1}\otimes Q_{i+1} \otimes B_{i+3,j}+\cdots + B_{i,j-2} \otimes Q_{j-2}.
$$

\begin{itemize}
\item{} For $j>i+1$, we define $A_{ij}$ as the quotient $B_{ij}/R_{ij}$.
\end{itemize}
Multiplication in $\mathcal{S}_{K}^{n.c.}(V)$ is defined as follows:
\begin{itemize}

\item{} if $a \in A_{ij}$, $b \in A_{lk}$ and $j \neq l$, then $ab=0$,

\item{} if $a \in A_{ij}$ and $b \in A_{jk}$, with either $i=j$ or $j=k$, then $ab$ is induced by the usual scalar action,

\item{}  otherwise, if $i<j<k$, we have
\begin{eqnarray*}
A_{ij} \otimes A_{jk} & = & \frac{B_{ij}}{R_{ij}} \otimes \frac{B_{jk}}{R_{jk}}\\
& \cong & \frac{B_{ik}}{R_{ij}\otimes B_{jk}+B_{ij} \otimes
R_{jk}}.
\end{eqnarray*}
Since $R_{ij} \otimes B_{jk}+B_{ij} \otimes R_{jk}$ is a submodule of $R_{ik}$,
there is thus an epi $\mu_{ijk}:A_{ij} \otimes A_{jk} \rightarrow
A_{ik}$.
\end{itemize}
We say the non-commutative symmetric algebra of a rank $n$ vector space $V$ {\it exists} if $V^{i*}$ has rank $n$ for all $i \in \mathbb{Z}$.
\end{defn}
It is straightforward to check, using \cite[Lemma 6.6]{duality}, that if $\mathcal{S}_{K}^{n.c.}(V)$ exists and $f:V \rightarrow W$ is an isomorphism,
then $f$ induces an isomorphism of $\mathbb{Z}$-algebras $\mathcal{S}_{K}^{n.c.}(V) \cong \mathcal{S}_{K}^{n.c.}(W)$.

\begin{lemma} \label{lemma.dual}
Suppose $K$ is a perfect field and $W$ is simple with $\operatorname{dim }_{K}({}_{K}W)=n<\infty$ and $\operatorname{dim }_{K}(W_{K})=m < \infty$.  If $i$ is even, then $\operatorname{dim }_{K}({}_{K}W^{i*})=n$ and $\operatorname{dim }_{K}({W^{i*}}_{K})=m$.  If $i$ is odd, then $\operatorname{dim }_{K}({}_{K}W^{i*})=m$ and $\operatorname{dim }_{K}({W^{i*}}_{K})=n$.
\end{lemma}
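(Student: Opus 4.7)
The plan is to reduce to the arithmetic classification via Theorem \ref{thm.papp} and Theorem \ref{thm.dualformula}, and then to exploit the fact that iterating the dual construction amounts to inverting the embedding $\overline{\lambda}$, an operation that is plainly an involution on orbits. First, I would invoke Theorem \ref{thm.papp} and Proposition \ref{prop.vlambda} to identify $W \cong V(\lambda)$ for some $\lambda \in \operatorname{Emb}(K)$ with $|\lambda^{G}| = n$, so that $[K(\lambda):K] = n$ and $[K(\lambda):\lambda(K)] = m$.

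Next, I would fix an extension $\overline{\lambda}$ of $\lambda$ to $\overline{K}$ and set $\mu = \overline{\lambda}^{-1}|_{K}$. Since $m$ is finite, Theorem \ref{thm.dualformula} applies and yields $W^{*} \cong {}^{*}W \cong V(\mu)$; moreover, the identities $[K(\mu):K] = [K(\lambda):\lambda(K)] = m$ and $[K(\mu):\mu(K)] = [K(\lambda):K] = n$ recorded in the last paragraph of its proof (via Lemma \ref{lemma.newform}) show that both $W^{*}$ and ${}^{*}W$ have left dimension $m$ and right dimension $n$. This settles the case $|i| = 1$.

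For the remaining cases, I would iterate. Theorem \ref{thm.dualformula} applies again to $V(\mu)$ because $[K(\mu):\mu(K)] = n$ is finite; taking $\overline{\lambda}^{-1}$ as the chosen extension of $\mu$ to $\overline{K}$, the corresponding inverse embedding is $(\overline{\lambda}^{-1})^{-1}|_{K} = \lambda$, so $W^{2*} \cong V(\mu)^{*} \cong V(\lambda) \cong W$, and symmetrically $W^{-2*} = {}^{*}({}^{*}W) \cong {}^{*}V(\mu) \cong V(\lambda) \cong W$. A short induction on $|i|$ then yields $W^{i*} \cong W$ when $i$ is even and $W^{i*} \cong W^{*}$ when $i$ is odd, whence the dimension statements follow from the computations in the first two paragraphs.

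I do not anticipate a real obstacle; the only subtle point is to choose compatible extensions of $\lambda$ and $\mu$ to $\overline{K}$ so that the involutive nature of $\overline{\lambda} \mapsto \overline{\lambda}^{-1}$ is manifest, after which the induction is automatic. Note that since the left and right dimensions of $W$ may differ, Corollary \ref{cor.matrix0} is not directly available; this is exactly why we need Theorem \ref{thm.dualformula} twice rather than a single appeal to double duality.
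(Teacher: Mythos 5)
Your proof is correct, and it takes a slightly different route from the paper's, in a way worth noting. The paper's own proof reads, in its entirety: identify $W \cong V(\lambda)$ via Proposition \ref{prop.vlambda}, then assert that ``Theorem \ref{thm.dualformula} and Corollary \ref{cor.matrix0} thus imply that for $i$ even, $W^{i*} \cong V(\lambda)$ and for $i$ odd, $W^{i*} \cong V(\mu)$,'' and finally read off dimensions. So the paper handles the iteration step $W^{i*} \to W^{(i\pm 1)*}$ by invoking Corollary \ref{cor.matrix0} (double duality), whereas you handle it by a second application of Theorem \ref{thm.dualformula}, using the observation that $\overline{\lambda}^{-1}$ is an extension of $\mu$ whose inverse restricts to $\lambda$, so that $V(\mu)^* \cong {}^*V(\mu) \cong V(\lambda)$. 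Both reduce to the same two--periodic pattern $W^{i*} \cong V(\lambda)$ ($i$ even), $W^{i*} \cong V(\mu)$ ($i$ odd).

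Your closing remark about Corollary \ref{cor.matrix0} is a genuinely valid concern, not a mere precaution: that corollary is stated and proved under the running hypothesis (set up just before Lemma \ref{lemma.matrix}) that $V$ has rank $n$, since the proof is framed in terms of a simultaneous basis $\{y_i\}$ and its dual bases $\{\phi_i\}$, $\{\psi_i\}$. The $W$ of Lemma \ref{lemma.dual} may well have $n \neq m$, so the paper's appeal to Corollary \ref{cor.matrix0} is, strictly speaking, to a version more general than what was proved. (The isomorphisms $({}^*V)^* \cong V$ and ${}^*(V^*) \cong V$ do in fact hold whenever $V$ has finite left and right dimensions, via the canonical evaluation map $v \mapsto (\phi \mapsto \phi(v))$ and a dimension count, but the paper does not prove this.) Your argument, by applying Theorem \ref{thm.dualformula} to $\mu$ rather than to a general double dual, uses only what is already established and so is arguably the cleaner justification for the induction step. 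The only thing you need to spell out a bit is that under the standing hypotheses the extension $\overline{\lambda}$ of $\lambda$ is indeed an automorphism of $\overline{K}$, so that $\overline{\lambda}^{-1}$ is available as a valid choice of extension of $\mu$; this is implicit in the paper's notation ``$\overline{\mu}$ the inverse of $\overline{\lambda}$'' and follows because $\overline{K}$ is algebraic over $\lambda(K)$ once $[K(\lambda):\lambda(K)] < \infty$.
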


\begin{proof}
Since $W$ is simple, Proposition \ref{prop.vlambda} implies there exists $\lambda \in \operatorname{Emb}(K)$ such that $W \cong V(\lambda)$.  Theorem \ref{thm.dualformula} and Corollary \ref{cor.matrix0} thus imply that for $i$ even, $W^{i*} \cong V(\lambda)$ and for $i$ odd, $W^{i*} \cong V(\mu)$.  The result follows from Proposition \ref{prop.vlambda} since $[K(\lambda):K]=[K(\mu):\mu(K)]$ and $[K(\lambda):\lambda(K)]=[K(\mu):K]$.
\end{proof}

\begin{prop} \label{prop.dual}
Suppose $K$ is perfect and $V$ is a two-sided vector space over $K$ with $\operatorname{dim }_{K}({}_{K}V)=n<\infty$ and $\operatorname{dim }_{K}(V_{K})=m < \infty$.  If $i$ is even, then $\operatorname{dim }_{K}({}_{K}V^{i*})=n$ and $\operatorname{dim }_{K}({V^{i*}}_{K})=m$.  If $i$ is odd, then $\operatorname{dim }_{K}({}_{K}V^{i*})=m$ and $\operatorname{dim }_{K}({V^{i*}}_{K})=n$.
\end{prop}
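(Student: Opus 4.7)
Proof proposal: The plan is to induct on $|i|$, reducing at each step to the simple case settled in Lemma \ref{lemma.dual} via composition series together with exactness of duality.

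First, since $V \in {\sf R} \cap {\sf L}$, every $K \otimes_{k} K$-submodule of $V$ is in particular a $K$-subspace of ${}_{K}V$ of dimension at most $n$, so $V$ has finite composition length. The same observation applies to any object of ${\sf R} \cap {\sf L}$, and each composition subquotient of such an object is itself in ${\sf R} \cap {\sf L}$, with left (resp.\ right) $K$-dimensions summing to those of the ambient module by additivity of $K$-dimension in short exact sequences.

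We now induct on $i \geq 0$; the case $i \leq 0$ is symmetric, with ${}^{*}(-)$ in place of $(-)^{*}$ throughout. The base case $i = 0$ is the hypothesis. Suppose the proposition holds for $i$, so $V^{i*}$ lies in ${\sf R} \cap {\sf L}$ with left and right dimensions $n_{i}, m_{i}$ given by the parity prescription. The identity $\dim_{K}({}_{K}(W^{*})) = \dim_{K}(W_{K})$ recorded in the proof of Lemma \ref{lemma.functorial}, applied with $W = V^{i*}$, yields $\dim_{K}({}_{K}V^{(i+1)*}) = m_{i}$, which is the value predicted by the parity of $i+1$. For the right dimension, choose a composition series $0 = U_{0} \subset U_{1} \subset \cdots \subset U_{s} = V^{i*}$ with simple subquotients $T_{j}$; each $T_{j}$ lies in ${\sf R} \cap {\sf L}$, and $\sum_{j} \dim_{K}({}_{K}T_{j}) = n_{i}$. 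By Lemma \ref{lemma.functorial}, $(-)^{*}$ is exact on ${\sf R}$, so applying it to this filtration produces a filtration of $V^{(i+1)*}$ whose subquotients are the $T_{j}^{*}$. Since $K$ is perfect and each $T_{j}$ is simple in ${\sf R} \cap {\sf L}$, Lemma \ref{lemma.dual} gives $\dim_{K}((T_{j}^{*})_{K}) = \dim_{K}({}_{K}T_{j})$, and summing yields $\dim_{K}(V^{(i+1)*}_{K}) = n_{i}$, again the value predicted by the parity of $i+1$. This completes the induction and in particular confirms $V^{(i+1)*} \in {\sf R} \cap {\sf L}$, so the induction propagates.

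The main obstacle, apart from bookkeeping, is ensuring that the iterated duals remain in ${\sf R} \cap {\sf L}$ so that Lemma \ref{lemma.functorial} continues to apply at each stage; this is automatic from the inductive hypothesis on dimensions. Note that the simple subquotients $T_{j}$ of $V^{i*}$ need not themselves be iterated duals of the simple subquotients of $V$, but this is immaterial: Lemma \ref{lemma.dual} supplies the dimension swap for every simple module in ${\sf R} \cap {\sf L}$, which is all the composition-series argument requires.
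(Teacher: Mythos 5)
Your proof is correct and follows essentially the same route as the paper's: reduce the dimension count for $V^{(i+1)*}$ to the simple case handled by Lemma \ref{lemma.dual}, using exactness of $(-)^{*}$ on ${\sf R}$ from Lemma \ref{lemma.functorial} together with the identity $\dim_{K}({}_{K}(W^{*}))=\dim_{K}(W_{K})$. The only organizational difference is that the paper fixes $i=1$, peels off a single simple submodule $W\subset V$, and inducts on left dimension, whereas you invoke a full composition series of $V^{i*}$ in one step inside the induction on $i$; these are the same argument with different bookkeeping.
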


\begin{proof}
We prove the result when $i=1$.  When $i>1$ the result follows from induction on $i$ since $(V^{i*})^{*}=V^{(i+1)*}$.  If $i<0$, a similar argument works, so we omit the proof in this case.  To prove the result in the $i=1$ case, we proceed by induction on the left dimension, $n$, of $V$.  If $n=1$ then $V$ is simple, so the result follows from Lemma \ref{lemma.dual}.  In the general case, if $V$ is simple, then the result follows from Lemma \ref{lemma.dual}.  Otherwise, let $W \subset V$ be simple and suppose $\operatorname{dim }_{K}({}_{K}W)=p$.  By Lemma \ref{lemma.functorial}, there is an exact sequence
$$
0 \rightarrow (V/W)^{*} \rightarrow V^{*} \rightarrow W^{*} \rightarrow 0.
$$
By Lemma \ref{lemma.dual}, $\operatorname{dim }_{K}({V^{*}}_{K})=p+\operatorname{dim }_{K}({(V/W)^{*}}_{K})$.  By the induction hypothesis, $\operatorname{dim }_{K}({(V/W)^{*}}_{K})=n-p$.  It follows that $\operatorname{dim }_{K}({V^{*}}_{K})=n$ as desired.  The proof in case $i=1$ follows.
\end{proof}

\begin{cor} \label{cor.ncsym}
If $K$ is a perfect field and $V$ has rank $n$ over $K$, then $\mathcal{S}_{K}^{n.c.}(V)$ exists.
\end{cor}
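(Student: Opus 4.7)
The plan is to reduce the corollary directly to Proposition \ref{prop.dual}. By the definition of existence (Definition \ref{defn.ncsymalg}), I need to verify that $V^{i*}$ has rank $n$ for every $i \in \mathbb{Z}$, i.e.\ that both $\dim_K({}_K V^{i*}) = n$ and $\dim_K(V^{i*}_K) = n$ hold for all integers $i$.

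Since $V$ has rank $n$, I set $m = n$ in the hypothesis of Proposition \ref{prop.dual}. That proposition then tells me that for every $i \in \mathbb{Z}$, regardless of parity, both $\dim_K({}_K V^{i*})$ and $\dim_K(V^{i*}_K)$ equal $n$. Hence $V^{i*}$ has rank $n$ for all $i$, and by Definition \ref{defn.ncsymalg} the non-commutative symmetric algebra $\mathcal{S}_{K}^{n.c.}(V)$ exists.

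The proof is essentially immediate once Proposition \ref{prop.dual} is in place; there is no real obstacle here, since the entire content of the corollary has been absorbed into Proposition \ref{prop.dual}. The only point worth emphasizing in the write-up is that the symmetry $n = m$ collapses the case distinction between even and odd $i$ in Proposition \ref{prop.dual}, so that one uniform rank is obtained across all twist degrees, which is precisely what the definition of existence demands.
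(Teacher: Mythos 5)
Your proof is correct and follows exactly the paper's own argument: the paper likewise observes that it suffices to show $V^{i*}$ has rank $n$ for all $i$ and that this is immediate from Proposition \ref{prop.dual} with $m=n$.
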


\begin{proof}
It suffices to show that $V^{i*}$ has rank $n$ for all $i \in \mathbb{Z}$.  This follows immediately from Proposition \ref{prop.dual}.
\end{proof}

\end{document}